\documentclass[reqno]{amsart}

\usepackage{a4wide}
\usepackage[russian,english]{babel}
\usepackage[T2A]{fontenc}
\usepackage[utf8]{inputenc} 
\usepackage{amsfonts}
\usepackage{amssymb, amsthm, amscd}
\usepackage{amsmath}
\usepackage{mathtools}
\usepackage{needspace}
\usepackage{lipsum}
\usepackage{comment}
\usepackage{etoolbox}
\usepackage{cmap}
\usepackage[pdftex]{graphicx}
\usepackage[unicode]{hyperref}
\usepackage[matrix,arrow,curve]{xy}
\usepackage[usenames,dvipsnames]{xcolor}
\usepackage{colortbl}
\usepackage{textcomp}
\usepackage[hyper]{amsbib}

\sloppy
\setlength{\parskip}{\medskipamount}
\setlength{\parindent}{0pt}

\newcommand{\CP}{\mathbb{C}\mathrm{P}}
\newcommand{\R}{\mathbb{R}}

\renewcommand{\C}{\mathcal{C}}
\renewcommand{\L}{\mathcal{L}}
\newcommand{\N}{\mathbf{N}}
\newcommand{\M}{\mathcal{M}}
\renewcommand{\H}{\mathcal{H}}
\newcommand{\A}{\mathcal{A}}
\newcommand{\eps}{\varepsilon}
\newcommand{\q}{\;\,}
\newcommand{\grad}{\mathop{\mathrm{grad}}\nolimits}
\newcommand{\E}{\mathcal{E}}
\newcommand{\ph}{\varphi}

\newcommand{\Poly}{\mathop{\mathrm{Poly}}\nolimits}
\newcommand{\Isom}{\mathop{\mathrm{Isom}}\nolimits}
\newcommand{\AffHull}{\mathop{\mathrm{AffineHull}}\nolimits}
\newcommand{\Hess}{\mathop{\mathrm{Hess}}\nolimits}
\newcommand{\Jac}{\mathop{\mathrm{Jac}}\nolimits}

\renewcommand{\tilde}{\widetilde}

\theoremstyle{plain}
\newtheorem{thm}{Theorem}[section]
\newtheorem{oldthm}[thm]{Theorem}
\newtheorem{lem}[thm]{Lemma}
\newtheorem{oldlem}[thm]{Lemma}
\newtheorem{prop}[thm]{Proposition}

\theoremstyle{definition}
\newtheorem{defn}[thm]{Definition}
\newtheorem{cor}[thm]{Corollary}

\theoremstyle{remark}
\newtheorem{rem}[thm]{Remark}

\AtBeginEnvironment{thm}{\begin{samepage}}
\AtEndEnvironment{thm}{\end{samepage}}
\AtBeginEnvironment{oldthm}{\begin{samepage}}
\AtEndEnvironment{oldthm}{\end{samepage}}
\AtBeginEnvironment{lem}{\begin{samepage}}
\AtEndEnvironment{lem}{\end{samepage}}
\AtBeginEnvironment{st}{\begin{samepage}}
\AtEndEnvironment{st}{\end{samepage}}
\AtBeginEnvironment{crit}{\begin{samepage}}
\AtEndEnvironment{crit}{\end{samepage}}
\AtBeginEnvironment{ax}{\begin{samepage}}
\AtEndEnvironment{ax}{\end{samepage}}
\AtBeginEnvironment{defn}{\begin{samepage}}
\AtEndEnvironment{defn}{\end{samepage}}
\AtBeginEnvironment{cor}{\begin{samepage}}
\AtEndEnvironment{cor}{\end{samepage}}
\AtBeginEnvironment{rem}{\begin{samepage}}
\AtEndEnvironment{rem}{\end{samepage}}
\AtBeginEnvironment{note}{\begin{samepage}}
\AtEndEnvironment{note}{\end{samepage}}
\AtBeginEnvironment{prop}{\begin{samepage}}
\AtEndEnvironment{prop}{\end{samepage}}

\title{Oriented Area as a Morse Function on Polygon Spaces}
\author{Daniil Mamaev}
\date{\today (Last Typeset)}
\address{Chebyshev Laboratory, St. Petersburg State University, 14th Line V.O., 29, Saint Petersburg 199178 Russia}
\email{dan.mamaev@gmail.com}
\thanks{The research is supported by «Native towns», a social investment program of PJSC «Gazprom Neft»}
\subjclass[2010]{58K05, 52B60}

\begin{document}
\selectlanguage{english}
\begin{abstract}
We study polygon spaces arising from planar configurations of necklaces with some of the beads fixed and some of the beads sliding freely. These spaces include configuration spaces of flexible polygons and some other natural polygon spaces. We characterise critical points of the oriented area function in geometric terms and give a formula for the Morse indices. Thus we obtain a generalisation of isoperimetric theorems for polygons in the plane.
\end{abstract}

\maketitle

\section{Preliminaries: necklaces, configuration spaces, and oriented area function}

Suppose one has a closed string with a number of labelled beads, a {\itshape necklace}. Some of the beads are fixed and some can slide freely (although the beads never pass through one another). Having the necklace in hand, one can try to put it on the plane (self-intersections are allowed) in such a way that the string is strained between every two consecutive beads. We will call this a {\itshape (strained planar) configuration of the necklace}. The space of all configurations (up to rotations and translations) of a given necklace, called {\itshape configuration space of the necklace}, together with the oriented area function on it is the main object of the present paper.

Let us now be precise. Given a tuple $(n_1, \ldots, n_k)$ of positive integers and a tuple $(L_1, \ldots, L_k)$ of positive reals, we define {\itshape a necklace $\N$} to be a tuple $\big((n_1, L_1), \ldots, (n_k, L_k)\big)$ interpreted as follows: 
\begin{itemize}
\item the necklace has the total of $n = n(\N) = n_1 + \ldots + n_k$ beads on it;
\item $k$ of the beads are fixed and numbered by the index $j = 1, \ldots, k$ in counter-clockwise order, the index $j$ is considered to be cyclic (that is, $j = 6k + 5$ is the same as $j = 5$);
\item there are $(n_j - 1)$ freely sliding beads between the $j$-th and the $(j + 1)$-th fixed bead;
\item the string has the total length of $L = L(\N) = L_1 + \ldots + L_k$;
\item the length of the string between the $j$-th and the $(j + 1)$-th fixed bead is equal to $L_j$.
\end{itemize}

We fix the language we will talk about polygons in the present paper.
\begin{itemize}
\item {\itshape A planar $n$-gon} is a collection of $n$ (labelled) points $(p_1, \ldots, p_n)$ in the Euclidian plane $\R^2$.
\item {\itshape The space of all planar $n$-gons $\Poly_n$} is thus just $\left(\R^2\right) ^ n$.
\item The {\itshape sides} of a polygon $P = (p_1, \ldots, p_n)$ are the segments $p_ip_{i + 1}$ for $i = 1, \ldots, n$, the {\itshape length of the $i$-th side} is $l_i = l_i(P) = |p_ip_{i + 1}|$. Note that the index $i = 1, \ldots, n$ is cyclic (that is, $i = 10n + 3$ is the same as $i = 3$).
\end{itemize} 

To avoid the messy indices, we introduce some additional notation associated with a necklace ${\N = \big((n_1, L_1), \ldots, (n_k, L_k)\big)}$. For index $j = 1, \ldots, k$
\begin{itemize}
\item we denote by $j^*$ the set of {\itshape indices, corresponding to the $j$-th piece~of~$\N$}:
\begin{equation} \label{definition of j^*}
j^* =  \{n_1 + \ldots + n_{j - 1} + 1, \ldots, n_1 + \ldots + n_j\};
\end{equation}
\item we introduce a function $\L_j\colon \Poly_n \to \R$, {\itshape the total length of the sides of a polygon, corresponding to the $j$-th piece of $\N$}, that is
\begin{equation} \label{definition of L_j}
\L_j(P) = \sum_{i \in j^*} l_i(P).
\end{equation}
\end{itemize}

And now we are ready to give the following
\begin{defn} \label{definition of a strained planar configuration of a necklace}
 {\itshape A (strained planar)  configuration of necklace ${\N = \big((n_1, L_1), \ldots, (n_k, L_k)\big)}$} is a polygon $P \in \Poly_n$ with $\L_j(P) = L_j$ for all $j = 1, \ldots, k$. 
\end{defn}

\begin{figure}
\label{example of a polygon}
\includegraphics[scale=0.3]{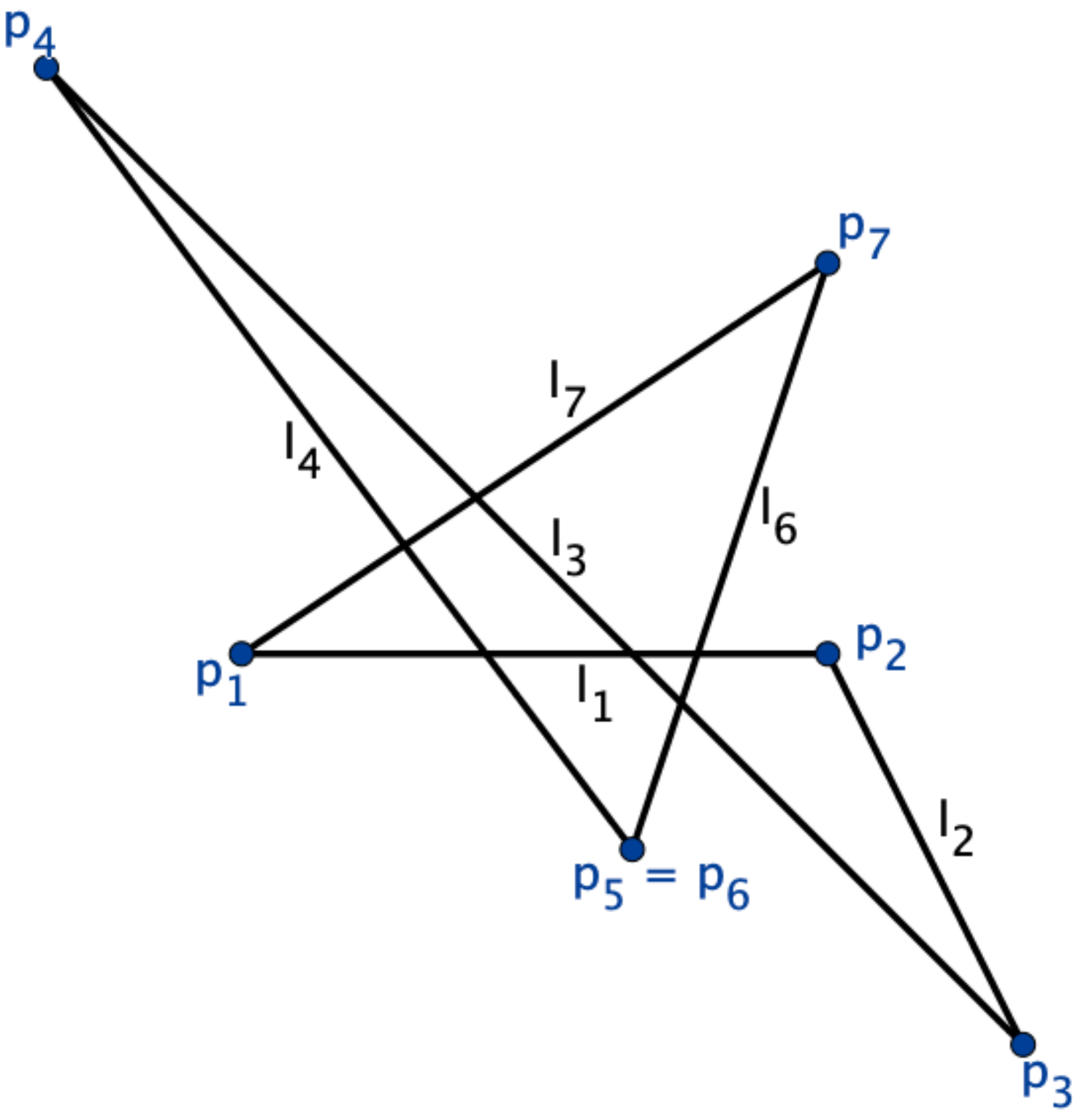}
\caption{A polygon in $\M\big((2, l_1 + l_2), (1, l_3), (4, l_4 + l_6 + l_7)\big)$}
\end{figure}

All configurations of necklace $\N = \big((n_1, L_1), \ldots, (n_k, L_k)\big)$ modulo translations and rotations form a space $\M(\N) = \M\big((n_1, L_1), \ldots, (n_k, L_k)\big)$ called {\itshape configuration space of necklace~$\N$}. More formally,
\begin{itemize}
\item Consider all the strained planar configurations of $\N$:
\begin{equation} \label{definition of the set of all configurations of a necklace}
\tilde{\M}(\N) = \tilde{\M}\big((n_1, L_1), \ldots, (n_k, L_k)\big) = \left\{P \in \Poly_n \left| \begin{aligned}
\L_j(P) = L_j \text{ for } j = 1, \ldots, k
\end{aligned} \right.\right\}. 
\end{equation}
\item The group $\Isom_+\left(\R^2\right)$ of orientation-preserving isometries of the Euclidean plane $\R^2$ acts diagonally on the space of all planar $n$-gons $\Poly_n = \left(\R ^ 2\right) ^ n$.
\item $\tilde \M(\N)$ is invariant under the action.
\item {\itshape The configuration space of the necklace $\N$} is the space of orbits:
\begin{equation} \label{definition of the configuration space of a necklace}
\M(\N) = \left.\tilde{\M}(\N)\right/\Isom_+(\R^2). 
\end{equation}
\end{itemize}

\begin{defn} \label{definition of oriented area}
{\itshape Oriented area} $\A$ of an $n$-gon ${P = ((x_1, y_1), \ldots, (x_n, y_n)) \in \left(\R ^ 2\right) ^ n}$ is defined by
\begin{equation}
\A(P) = 
\frac{1}{2} \begin{vmatrix}
x_1 & x_2\\ y_1 & y_2
\end{vmatrix} +
\frac{1}{2} \begin{vmatrix}
x_2 & x_3\\ y_2 & y_3
\end{vmatrix} + \ldots + 
\frac{1}{2} \begin{vmatrix}
x_n & x_1\\ y_n & y_1
\end{vmatrix}. 
\end{equation}
\end{defn}
Oriented area is preserved by the action of $\Isom_+(\R^2)$ and thus gives rise to a well defined continuous function on $\left(\R^2\right)^n/\Isom_+(\R^2)$ hence on all of $\M(\N)$. We will denote these functions by the same letter $\A$. The study of critical points (i. e. the solutions of $d\A (P) = 0$) of $\A \colon \M(\N) \to \R$ is the substance of the present paper. 

The paper is organised as follows. In Section~\ref{Special cases} we review previously studied extreme cases: polygonal linkages (the `all beads are fixed' case) and polygons with fixed perimeter (the `one bead is fixed' case, which is clearly the same as `none of the beads are fixed' case). In Section~\ref{Discussion on singular locus of the configuration space} we discuss the regularity properties of configuration spaces of necklaces. In the subsequent sections we study only non-singular part of configuration space. In Section~\ref{Main results} we give a geometric description of critical points of oriented area in the general case (Theorem~\ref{critical points}) and deduce a formula for their Morse indices (Theorem~\ref{Morse index}) from Lemmata~\ref{orthogonality of cyclic and linkages},~\ref{orthogonality of cyclic pieces},~\ref{Morse index for cyclic}. In Section~\ref{Orthogonality with respect to the Hessian of oriented area} the auxiliary Lemmata~\ref{orthogonality of cyclic and linkages}~and~\ref{orthogonality of cyclic pieces} concerning orthogonality of certain spaces with respect to the Hessian form of the oriented area function are proven. In Section~\ref{Addenda} we discuss the `two consecutive beads are fixed' case and give a proof of Lemma~\ref{Morse index for cyclic}. 

\subsection*{Acknowledgement}
This research is a continuation of my bachelor thesis in which the case of configuration spaces of necklaces with two consecutive fixed beads was studied. I am deeply indebted to Gaiane Panina for posing the problem and supervising my research. I am also thankful to Joseph Gordon and Alena Zhukova for fruitful discussions and to Nathan Blacher for his valuable comments on the linguistic quality of the paper.

\section{An overview of existing results}
\label{Special cases}
\subsection{Configuration spaces of polygonal linkages} In the notation of the present paper these are the spaces ${\M((1, l_1), \ldots, (1, l_n))}$, i. e. the spaces $\M(\N)$ for necklaces $\N$ with all beads being fixed. These spaces are studied in many aspects (see e. g. \cite{CD17} or \cite{F08} for a thorough survey). On the side of studying oriented area on these spaces, the first general fact about its critical points was noticed by Thomas Banchoff (unpublished), reproved by Khimshiashvili and Panina \cite{KP08} (their technique required some non-degeneracy assumptions) and then reproved again by Leger \cite{L18} in full generality.

\begin{samepage}
\begin{oldthm}[{\bfseries Critical configurations in the `all beads are fixed' case})\\(Bunchoff, Khimshiashvili, Leger, and Panina]\label{critical points for linkages}~\\
Let $\N$ be a necklace with all the beads fixed. Then a polygon ${P \in \M_{sm}(\N)}$ is a critical point of $\A$ if and only if it is cyclic (i. e. inscribed in a circle). 
\end{oldthm}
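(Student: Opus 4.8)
The plan is to prove the theorem by the standard Lagrange multiplier method, since critical points of $\A$ on $\M(\N)$ are exactly the points where $d\A$ is a linear combination of the differentials of the constraint functions $\L_j$. First I would set up coordinates. Since we work modulo $\Isom_+(\R^2)$, I will compute on the preimage $\tilde\M(\N) \subset \Poly_n$, where the constraints are simply $\L_j(P) = L_j$ for all $j$. In the present `all beads fixed' case every $n_j = 1$, so each side has its own length constraint: $l_i(P) = l_i$ for $i = 1, \ldots, n$. A polygon $P$ is a smooth critical point of $\A$ precisely when there exist Lagrange multipliers $\lambda_1, \ldots, \lambda_n$ with
\begin{equation}
d\A(P) = \sum_{i = 1}^{n} \lambda_i \, dl_i(P)
\end{equation}
after passing to the quotient (equivalently, when this holds modulo the three-dimensional tangent space to the $\Isom_+(\R^2)$-orbit, which $\A$ annihilates automatically).

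The key computational step is to evaluate both differentials at a single vertex $p_i = (x_i, y_i)$. A direct computation from Definition~\ref{definition of oriented area} gives the gradient of $\A$ with respect to $p_i$ as the vector $\tfrac12(y_{i-1} - y_{i+1}, x_{i+1} - x_{i-1})$, i.e. $\tfrac12 J(p_{i+1} - p_{i-1})$ where $J$ is rotation by $\tfrac{\pi}{2}$; geometrically $\grad_{p_i}\A$ is obtained by rotating the segment $p_{i-1}p_{i+1}$ by a right angle. On the other side, $l_i = |p_{i+1} - p_i|$ depends on $p_i$ only through the two incident sides, so the $p_i$-component of $\sum_j \lambda_j\, dl_j$ is $\lambda_{i-1} u_{i-1} - \lambda_i u_i$, where $u_i$ is the unit vector along the $i$-th side pointing from $p_i$ to $p_{i+1}$. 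Equating the two, the critical point equation becomes, for each vertex,
\begin{equation}
\tfrac12 J(p_{i+1} - p_{i-1}) = \lambda_{i-1} u_{i-1} - \lambda_i u_i.
\end{equation}

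The geometric heart of the argument is to show that this system of vector equations is equivalent to the polygon being cyclic. I would interpret each equation at $p_i$ as a statement about the two incident edge vectors: decomposing $J(p_{i+1} - p_{i-1}) = J(l_i u_i) - J(l_{i-1} u_{i-1})$ and comparing with $\lambda_{i-1} u_{i-1} - \lambda_i u_i$, one reads off a relation between $\lambda_i$ and the exterior angle at $p_i$ (the angle between $u_{i-1}$ and $u_i$). The cleanest way to see the circle emerging is to show that the equations force the existence of a common point $O$ equidistant from all vertices: the multiplier condition says the perpendicular bisector direction of each side, scaled appropriately, closes up consistently, so that the locus of centers of the incident sides coincides. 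Concretely, I expect the equations to be equivalent to the statement that all the quantities $\lambda_i / \sin(\text{angle})$ agree, which by the law of sines is exactly the condition that all sides subtend a common circumradius; conversely, for a cyclic polygon one checks the multipliers $\lambda_i = R\cos\theta_i$ (with $R$ the circumradius) satisfy the system.

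The main obstacle I anticipate is the geometric translation in the last step rather than the calculus: turning the algebraic multiplier identity into the clean assertion ``inscribed in a circle'' requires care about orientation and signs (the oriented area and the multipliers can be negative), and one must handle the degenerate configurations where some edges are parallel or a putative center escapes to infinity. I would organise this by first treating the regular part $\M_{sm}(\N)$ as stipulated in the statement, so that the constraint differentials $dl_i$ are independent and the Lagrange framework applies cleanly, and then argue that the common-circumcenter condition is both necessary and sufficient, invoking the smoothness hypothesis to rule out the degenerate branches. Since the full-generality version is attributed to Leger~\cite{L18}, I would cite that for the delicate degenerate cases and present the Lagrange computation as the conceptual backbone.
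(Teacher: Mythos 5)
Your strategy is sound, but first a point of comparison: the paper never proves this statement at all. It is quoted as an imported result (attributed to Banchoff, Khimshiashvili--Panina \cite{KP08} and Leger \cite{L18}), and the paper's own Lagrange-multiplier argument --- the proof of Theorem~\ref{critical points} --- actually \emph{cites} this very theorem at the step where cyclicity is concluded for non-collinear polygons. So your proposal, once completed, is more self-contained than anything in the paper: it shares the same framework (Lagrange multipliers upstairs in $\Poly_n$, with the $\Isom_+(\R^2)$-invariance handled exactly as you say), but where the paper works in angle coordinates $\beta_i$ and defers the circle to the literature, you produce it directly in vector form. What your route buys is a from-scratch proof of the linkage case; what the paper's coordinates buy is that they generalise immediately to necklaces with sliding beads.

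Three things to repair or finish. First, a sign slip: from Definition~\ref{definition of oriented area} one gets $\partial\A/\partial x_i = \tfrac12(y_{i+1}-y_{i-1})$ and $\partial\A/\partial y_i = \tfrac12(x_{i-1}-x_{i+1})$, so with $J$ the counter-clockwise rotation the gradient is $\tfrac12 J(p_{i-1}-p_{i+1})$, not $\tfrac12 J(p_{i+1}-p_{i-1})$; this is harmless, since negating every $\lambda_i$ absorbs it. Second, the step you phrase as ``I expect the equations to be equivalent to\dots'' is the heart of the matter and must not be left as an expectation; fortunately it is a two-line computation. Apply $J$ to your vertex equation, use $J^2=-\mathrm{id}$ and $p_{i+1}-p_{i-1} = l_iu_i + l_{i-1}u_{i-1}$, and add $p_i$ to both sides; the equation at vertex $i$ becomes
\begin{equation*}
m_i + \lambda_i J u_i \;=\; m_{i-1} + \lambda_{i-1} J u_{i-1},
\qquad\text{where } m_i = p_i + \tfrac{1}{2}\, l_i u_i \text{ is the midpoint of the } i\text{-th side.}
\end{equation*}
Thus solvability of the Lagrange system is \emph{literally} the statement that all $n$ perpendicular bisectors of the sides pass through one common point $O$, which is equivalent to all vertices being equidistant from $O$, i.e.\ to $P$ being cyclic; conversely, when $P$ is cyclic with centre $O$, taking $\lambda_i$ to be the signed distance from $m_i$ to $O$ along $Ju_i$ solves the system (every side is nonzero on $\M_{sm}$, so the bisectors are defined). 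This replaces the vague ``law of sines'' discussion and settles all sign and orientation worries at once. Third, your concern about degenerate configurations and the proposed deferral to \cite{L18} is unnecessary: the statement concerns only $\M_{sm}(\N)$, and by Lemma~\ref{criterion for being singular configuration} (which, with all beads fixed, has no inner indices) the non-singular configurations are exactly those with no vanishing side and not lying on a line --- precisely the condition making the differentials $dl_i$ independent, so the Lagrange criterion applies with no degenerate branches left over. Leger's full-generality contribution concerns configurations outside $\M_{sm}(\N)$, which the statement excludes.
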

\end{samepage}

After describing critical points, the following natural question arises: are these critical points Morse (i. e. whether $\Hess_P \A$, the Hessian of $\A$ at $P$, is a non-degenerate bilinear form on $T_P \M(\N)$) and if they are, what is the Morse index (the dimension of maximal subspace on which $\Hess_P \A$ is negative definite). The state-of-art answer to this question for the case of polygonal linkages requires some more definitions.

\begin{defn}
Let $P$ be a cyclic polygon, $o$ the center of its circumscribed circle, and ${i \in \{1, \ldots, n\}}$.
\begin{itemize}
\item 
{\itshape the central half-angle of the $i$-th side of $P$} is 
\begin{equation} \label{definition of central half-angle ot a side}
\alpha_i(P) = \frac{|\angle p_iop_{i + 1}|}{2} \in [0, \pi/2]
\end{equation}
\item 
{\itshape the orientation of the $i$-th side of $P$} is 
\begin{equation} \label{definition of orientation of a side}
\eps_i(P) = \left\{ \begin{aligned}
1, \text{ if } \angle p_iop_{i + 1} &\in (0, \pi);\\
0, \text{ if } \angle p_iop_{i + 1} &\in \{0, \pi\};\\
-1, \text{ if } \angle p_iop_{i + 1} &\in (-\pi, 0).
\end{aligned} \right.
\end{equation}
\end{itemize}
\end{defn}

\begin{figure}
\label{example of cyclic polygon}
\includegraphics[scale=0.3]{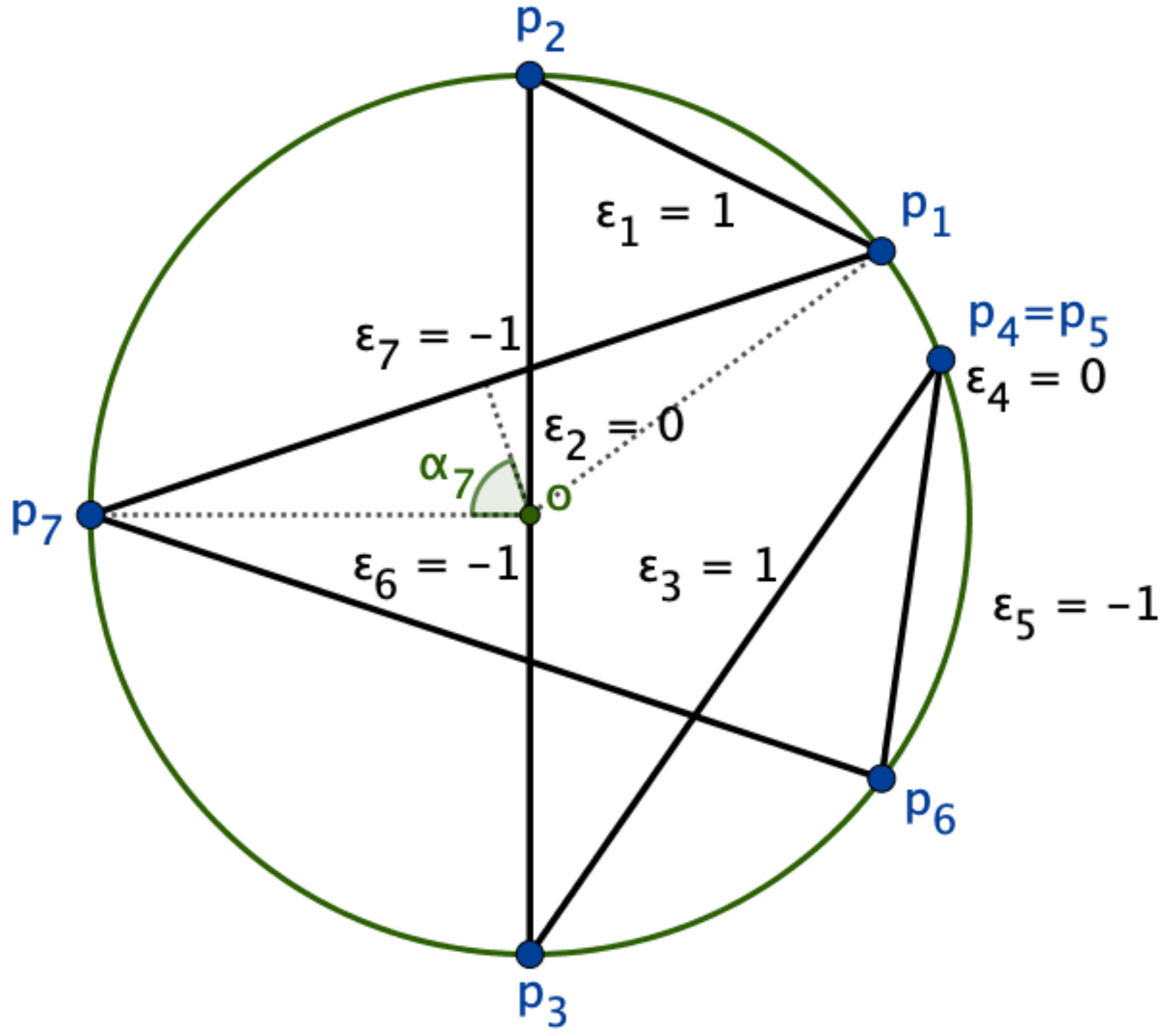}
\caption{A cyclic polygon with some notation}
\end{figure}
'
We will denote by $\C_n$ the {\itshape configuration space of cyclic polygons}, more precisely, 
\begin{equation} \label{definition of the configuration space of cyclic polygons}
\C_n = \left. \left\{P \in \left(\R ^ 2\right) ^ n \left| \begin{aligned}
P \text{ is a cyclic polygon};\\
\AffHull(P) = \R^2
\end{aligned} \right.\right\} \right/\Isom_+(\R^2), 
\end{equation}
For $P \in \C_n$ we will denote by $\Omega_P$ its circumscribed circle, by $o_P$ the centre of $\Omega_P$, and by $R_P$ the radius of $\Omega_P$. 

\begin{defn} \label{definition of an admissible cyclic polygon}
Let $P$ be a cyclic polygon with at least three different vertices. It is called {\itshape admissible} if no edge of $P$ passes through the center of its circumscribed circle. In this case its {\itshape winding number} $w_P = w(P, o)$ with respect to the centre of circumscribed circle is well-defined.
\end{defn}

\begin{oldthm}[{\bfseries Morse indices in the `all beads are fixed' case})\\(Gordon, Khimshiashvili, Panina, Teplitskaya, and Zhukova] \label{Morse index for linkages}~\\ 
Let $\N$ be a necklace without freely moving beads, and let ${P \in \M_{sm}(\N)}$ be an admissible cyclic polygon. Then $P$ is a Morse point of $\A$ if and only if $\sum\limits_{i = 1}^{n} \eps_i\tan \alpha_i \ne 0$ and in this case its Morse index is 
$$
\mu_P(\A) = \#\{i \in \{1, \ldots, n\} \colon \eps_i > 0\} - 1 - 2w_P - \left\{ \begin{aligned}
0, & \text{ if } \sum_{i = 1}^n \eps_i \tan \alpha_i > 0;\\
1, & \text{ otherwise}.
\end{aligned} \right. 
$$
\end{oldthm}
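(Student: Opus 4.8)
The plan is to work in the coordinates given by the direction angles of the sides. Writing $v_i = p_{i+1} - p_i = l_i(\cos\theta_i, \sin\theta_i)$, the side lengths $l_i$ are automatically fixed, so the only remaining constraint is the closing condition $\sum_{i=1}^n l_i e^{\sqrt{-1}\,\theta_i} = 0$ (two real equations), and one quotients by the simultaneous rotation $\theta_i \mapsto \theta_i + c$. This realises $\M_{sm}(\N)$ as an $(n-3)$-dimensional manifold, and the oriented area becomes the explicit function $\A(\theta) = \frac{1}{2}\sum_{i<j} l_i l_j \sin(\theta_j - \theta_i)$. First I would recompute $d\A$ in these coordinates to recover, via Lagrange multipliers for the two closing constraints, the statement of Theorem~\ref{critical points for linkages}: the multiplier vector turns out to be the circumcentre, and the critical equations say exactly that $P$ is cyclic. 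This fixes the geometric normalisation (circumcentre at the origin, vertices $p_i = R_P(\cos\gamma_i, \sin\gamma_i)$, with $\gamma_{i+1} - \gamma_i = 2\eps_i\alpha_i$ and $\sum_i 2\eps_i\alpha_i = 2\pi w_P$).

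Next I would compute the bordered (constrained) Hessian of $\A$ at such a $P$, that is, the restriction of $\Hess_P\A$ to the tangent space $T_P\M_{sm}(\N)$, which here is the space of angle variations $\delta\theta$ satisfying the linearised closing condition modulo the constant mode. The crucial structural observation is geometric: inside the family of cyclic polygons with the given side lengths, an infinitesimal change of circumradius $\delta R$ forces, through $\sin\alpha_i = l_i/(2R_P)$, the change $\delta\alpha_i = -\tan\alpha_i\,\delta R/R_P$, and this deformation closes up to first order precisely when $\sum_i \eps_i \tan\alpha_i = 0$. Thus the quantity $\sum_i \eps_i\tan\alpha_i$ governs the existence of a cyclic-preserving tangent direction, and I expect it to be exactly the direction responsible for any degeneracy: I would show that $\Hess_P\A$ is degenerate on $T_P\M_{sm}(\N)$ if and only if $\sum_i\eps_i\tan\alpha_i = 0$, and that otherwise a distinguished ``radial'' direction $\xi$ contributes a single eigenvalue whose sign equals $-\operatorname{sign}\big(\sum_i\eps_i\tan\alpha_i\big)$.

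To read off the index I would split $T_P\M_{sm}(\N)$ as a $\Hess_P\A$-orthogonal sum of $\R\xi$ and a complementary subspace on which the form is nondegenerate, diagonalise $\Hess_P\A$ on the complement, and count negative squares. Here the diagonal entries carry signs governed by the orientations $\eps_i$, so that the ``bending'' part of the form should have negative index $\#\{i : \eps_i > 0\}$, with the two real closing constraints, the one-dimensional rotation quotient, and the way the side directions $\theta_i$ wind around the circle as $i$ runs over the cyclic order together accounting for the correction $-1 - 2w_P$. Adding the contribution of $\xi$ — which is $0$ when $\sum_i\eps_i\tan\alpha_i > 0$ and $1$ otherwise — then yields the stated formula, and the degeneracy criterion of the previous paragraph gives the Morse condition.

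The hard part will be the last step: correctly diagonalising the constrained Hessian on the complementary subspace and, above all, tracking how the winding number $w_P$ enters the signature. The analytic skeleton (critical points, the radial mode $\xi$, and the rôle of $\sum_i\eps_i\tan\alpha_i$) is comparatively robust, but turning the combinatorics of the cyclic ordering and the orientations $\eps_i$ into the precise count $\#\{i:\eps_i>0\} - 1 - 2w_P$ is where the genuine bookkeeping lies; it is also the step most sensitive to the admissibility hypothesis (no side through the centre), which is what guarantees that every $\eps_i \ne 0$, that each $\alpha_i \in (0,\pi/2)$ so that $\tan\alpha_i$ is finite and nonzero, and that $w_P$ is well defined.
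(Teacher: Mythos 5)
First, a point of reference: this paper does not prove Theorem~\ref{Morse index for linkages} at all. It is imported as a known result (proved in this precise form in \cite{GPT19}, with the formula appearing earlier in \cite{KP12}, \cite{PZ11}, \cite{Z13}) and used as a black box in the proof of Theorem~\ref{Morse index}. So your proposal must be judged as a reconstruction of the literature proof. Its skeleton is sound: the side-direction coordinates with the two closing constraints, the Lagrange-multiplier re-derivation of Theorem~\ref{critical points for linkages}, and, most valuably, the computation that keeping side lengths and cyclicity forces $\delta\alpha_i = -\tan\alpha_i\,\delta R/R_P$, so that such a radial deformation satisfies the linearised closing condition $\sum_i \eps_i\,\delta\alpha_i = 0$ exactly when $\sum_i \eps_i\tan\alpha_i = 0$. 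This correctly locates the source of degeneracy and is indeed why such polygons are called bifurcating.

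However, there are genuine gaps, and they sit exactly where the content of the theorem lies. (i) Your distinguished direction $\xi$ does not exist in the case where you need it: when $\sum_i\eps_i\tan\alpha_i \ne 0$, the radial deformation violates the linearised closing condition, hence is \emph{not} a tangent vector of $\M_{sm}(\N)$, so the proposed $\Hess_P\A$-orthogonal splitting $T_P\M_{sm}(\N) = \R\xi \oplus (\text{complement})$ is undefined precisely in the Morse case; consistently, in the paper's general decomposition (Lemma~\ref{orthogonality of cyclic and linkages}) the cyclic factor $T_P\C^P$ has dimension $n - k = 0$ when all beads are fixed, so no canonical ``radial'' tangent direction survives, and you never specify a substitute. (ii) The actual index count is asserted, not derived: nothing in the plan produces the term $-2w_P$, or the count $\#\{i\colon\eps_i>0\}$, from diagonalising the constrained Hessian; this bookkeeping is the substance of \cite{PZ11}, \cite{Z13}, \cite{GPT19}, and you yourself defer it as ``the hard part''. (iii) What is sketched is internally inconsistent: in your second paragraph the eigenvalue along $\xi$ has sign $-\operatorname{sign}\bigl(\sum_i\eps_i\tan\alpha_i\bigr)$, i.e.\ it adds a negative square exactly when the sum is \emph{positive}, while in your third paragraph it contributes $1$ exactly when the sum is \emph{negative}; moreover, combined with your claimed complement index $\#\{i\colon\eps_i>0\} - 1 - 2w_P$, either choice makes the correction term enter with the wrong sign relative to the stated formula, which \emph{subtracts} it. The arithmetic that actually matches the statement is a complement of index $\#\{i\colon\eps_i>0\} - 2 - 2w_P$ plus one negative square from the remaining direction iff $\sum_i\eps_i\tan\alpha_i > 0$. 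Finally, even the easy half (that $\sum_i\eps_i\tan\alpha_i = 0$ implies $P$ is not Morse) still needs an argument that the first-order kernel vector is genuinely annihilated by $\Hess_P\A$, since the bifurcating family need not exist beyond first order.
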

The formula more or less explicitly appeared in \cite{KP12}, \cite{PZ11}, and \cite{Z13}, but in this form, with the precise condition of being Morse, the theorem was proved only in \cite{GPT19}. In view of the theorem, following \cite{GPT19}, we give the following 
\begin{defn} \label{definition of a bifurcating polygon}
An admissible cyclic polygon $P$ is called {\itshape bifurcating} if $\sum\limits_{i = 1}^{n} \eps_i\tan \alpha_i = 0$.
\end{defn}

\subsection{Configuration space of $n$-gons with fixed perimeter}. This is the space $\M((n, L))$ (they are obviously the same for different $L$ so usually $L$ is set to $1$). It is no secret since antiquity that the polygon in $\M((n, L))$ maximising oriented area is the convex regular one. But all the critical points of oriented area together with their indices were determined only in the recent paper \cite{KPS18} by Khimshiashvili, Panina and Siersma. Again, before stating the result, we give
\begin{defn} \label{definition of a regular star and a complete fold}
{\itshape A regular star} is a cyclic polygon $P$ such that all its sides are equal and have the same orientation (see \eqref{definition of orientation of a side}). 

{\itshape A complete fold} is a regular star $P$ with $p_i = p_{i + 2}$ for all $i = 1, \ldots, n$. It exists for even $n$ only.
\end{defn}

\begin{oldthm}[{\bfseries Critical~configurations~and~Morse~indices~in~the~`one~bead~is~fixed'~case})\\(Khimshiashvili, Panina, and Siersma] \label{all about polygons with fixed perimeter}~
\begin{enumerate}
\item $\M((n, L))$ is homeomorphic to $\CP^{n - 2}$.
\item A polygon $P \in \M_{sm}((n, L))$ is a critical point of $\A$, if and only if it is a regular star.
\item The stars with maximal winding numbers are Morse critical points of $\A$.
\item Under assumption that all regular stars are Morse critical points,
the Morse indices are:
$$
\mu_P(\A) = \left\{ \begin{aligned}
&2w_P - 2, &\text{ if } &w_P < 0;\\
&2n - 2w_P - 2, &\text{ if } &w_P > 0;\\
&n - 2, &\text{ if } &P \text{ is a complete fold}.
\end{aligned} \right. 
$$
\end{enumerate}
\end{oldthm}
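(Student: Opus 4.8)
The plan is to treat the four assertions in turn, with the first two reducing to direct computations and the last two to a symmetry-aided analysis of the Hessian, which is where the real work lies.

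For (1), I would pass to edge vectors. Writing $P = (p_1, \dots, p_n)$ in complex coordinates and setting $z_i = p_{i+1} - p_i \in \Cm$, the closing relation $\sum_i z_i = 0$ holds automatically and records exactly the translation-invariant data, so modulo translations $P$ is encoded by $z = (z_1, \dots, z_n)$ in the complex hyperplane $V = \{\sum_i z_i = 0\} \cong \Cm^{n-1}$. The perimeter $\Pi(z) = \sum_i |z_i|$ is continuous, positive on $V \setminus \{0\}$, and positively homogeneous of degree one, so radial projection $z \mapsto L\,z/\Pi(z)$ is a homeomorphism from the unit sphere of $V$ onto the level set $\{\Pi = L\}$. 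Rotations act on $V$ as the scalar action $z \mapsto e^{i\theta} z$, which is free off the origin and commutes with radial projection, whence
\[
\M((n, L)) = \{z \in V : \Pi(z) = L\}\big/ U(1) \cong S^{2n-3}/U(1) = \CP^{n-2}.
\]

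For (2), I would use Lagrange multipliers on $\tilde\M((n,L)) = \{\Pi = L\}$: since both $\A$ and $\Pi$ are $\Isom_+(\R^2)$-invariant, $[P]$ is critical on the quotient if and only if $\nabla \A = \lambda\,\nabla \Pi$ for a single scalar $\lambda$. A vertex-by-vertex computation gives, with $J$ rotation by $\pi/2$ and $\hat e_i = e^{i\phi_i}$ the unit $i$-th edge vector, $\nabla_{p_j}\A = \tfrac12 J(p_{j-1} - p_{j+1})$ and $\nabla_{p_j}\Pi = \hat e_{j-1} - \hat e_j$. Substituting $p_{j-1} - p_{j+1} = -l_{j-1}\hat e_{j-1} - l_j \hat e_j$ and passing to complex notation, the critical equation becomes, for every $j$, the relation $(\lambda - \tfrac{i}{2}l_j)e^{i\phi_j} = (\lambda + \tfrac{i}{2}l_{j-1})e^{i\phi_{j-1}}$. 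Taking moduli forces $l_j = l_{j-1}$, so all sides are equal; the residual phase equation then forces $\phi_j - \phi_{j-1}$ to be a constant angle. Equal sides together with a constant turning angle is precisely the condition that $P$ is inscribed in a circle with all sides of equal length and equal orientation, i.e. a regular star.

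For (3) and (4) I would compute $\Hess_P\A$ on $T_P\M$ at a regular star and exploit its cyclic symmetry. A regular star of winding number $w$ is invariant under rotating the plane by its turning angle $2\pi w/n$ while cyclically relabeling the vertices; this $\Z/n$-action fixes $[P]$, hence acts on $T_{[P]}\M \cong T_{[P]}\CP^{n-2}$ and commutes with the Hessian. Complexifying and decomposing $T_{[P]}\M$ into the characters $\chi_m$ of $\Z/n$ block-diagonalizes $\Hess_P\A$ into one- and two-dimensional blocks indexed by the Fourier modes $m$, whose entries are elementary trigonometric expressions in $w$ and $n$. The plan is then to identify which modes can produce a vanishing block (these are exactly the degenerate/bifurcating directions, with the complete fold arising as the mode $m = n/2$), to show that for stars of maximal winding number no such block vanishes so that the form is non-degenerate, and finally to count mode by mode the blocks on which the form is negative definite, thereby recovering $\mu_P(\A)$.

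I expect the Hessian analysis of (3)–(4) to be the main obstacle. The difficulty is structural rather than conceptual: one must compute the second variation of $\A$ correctly on the constrained, quotiented space $\CP^{n-2}$ rather than on the ambient $(\R^2)^n$, carefully accounting for the single perimeter constraint and the three isometry directions when forming the reduced Hessian, and then evaluate the signature of each symmetry block. The case analysis by the sign of $w$, the separate treatment of the complete fold, and pinning down the exact non-degeneracy locus are where the bulk of the effort concentrates.
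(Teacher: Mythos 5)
Your parts (1) and (2) are correct. The edge-vector/radial-projection argument for (1) is clean and handles the full space $\M((n,L))$ including degenerate polygons, and your complex Lagrange computation for (2) is sound: taking moduli in $(\lambda - \tfrac{i}{2}l_j)e^{i\phi_j} = (\lambda + \tfrac{i}{2}l_{j-1})e^{i\phi_{j-1}}$ forces equal sides and a constant turning angle, and (a point you gloss over but which is easily supplied) the closing condition then forces the turning angle to be $2\pi w/n$, with turning angle $\pi$ giving the complete fold. For calibration: the paper does not prove this theorem at all --- it is imported from \cite{KPS18} as background --- and your part (2) is essentially the $k=1$ case of the paper's Theorem~\ref{critical points}, whose proof is the same multiplier computation written in real coordinates.

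The genuine gap is in (3) and (4). What you offer there is a strategy, not a proof: you never compute the blocks of $\Hess_P\A$ in the $\Z/n$-character decomposition, never verify non-vanishing of the blocks for stars of maximal winding number, and never count the negative blocks --- yet these three computations \emph{are} assertions (3) and (4); everything before them is routine. The symmetry approach is viable (it is close in spirit to the cyclic-polygon Hessian analysis of \cite{PZ11}, \cite{Z13}), but the dependence of each block's signature on the mode, on $w_P$, and on the reduction by the perimeter constraint and the rotation direction is exactly where the dichotomy $2n - 2w_P - 2$ versus $-2w_P - 2$ comes from, so nothing about (3)--(4) is established. Note also that your plan identifies the complete fold with the mode $m = n/2$, but the complete fold is itself a critical point needing its own index computation ($n-2$), not merely a degeneration channel of other stars. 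A route that avoids the block computation entirely, and is the one the paper's own machinery suggests (the paper remarks after Lemma~\ref{regular stars are local maxima} that this removes the assumption in (4)): split $T_P\M$ into the tangent spaces of the equilateral polygons $\E^P$ and of the cyclic polygons $\C^P$; these are transversal and $\Hess_P\A$-orthogonal by Lemma~\ref{orthogonality of cyclic and linkages}; on $\E^P$ the index is $n - 1 - 2w_P$ by Theorem~\ref{Morse index for linkages} (a regular star that is not a complete fold is admissible and non-bifurcating, since all summands $\eps_i\tan\alpha_i$ are equal and nonzero, which also yields (3)); on $\C^P$ a regular star with $w_P > 0$ is a non-degenerate maximum by Lemma~\ref{regular stars are local maxima}, contributing $n - 1$. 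Summing gives $2n - 2w_P - 2$, and the case $w_P < 0$ follows by reflection.
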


They also prove a particularly insightful \cite[Lemma 2]{KPS18}:
\begin{oldlem} \label{regular stars are local maxima}
Let $P$ be a regular star which is not a complete fold with $w_P > 0$. Then $P$ is a non-degenerate local maximum on $\C_n$.
\end{oldlem}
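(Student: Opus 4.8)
The plan is to reduce the statement to an elementary extremal problem for the central angles and then to exhibit an exact cancellation in the relevant Hessian. Near $P$ every cyclic polygon is inscribed in a circle of some radius $R > 0$ and has signed central angles $\varphi_i = \angle p_i o p_{i+1} \in (-\pi, \pi]$; since $w_P > 0$ and $P$ is not a complete fold, at $P$ one has $\varphi_i = 2\pi w_P / n \in (0, \pi)$ and $\eps_i = 1$ for every $i$, and both conditions persist on a neighbourhood. The pair $(R, \varphi)$ subject to the single relation $\sum_i \varphi_i = 2\pi w_P$ gives local coordinates on $\C_n$ (so $\dim \C_n = n$), in which
\begin{equation*}
\A = \tfrac{1}{2} R^2 \sum_{i = 1}^n \sin \varphi_i, \qquad \L = 2R \sum_{i = 1}^n \sin(\varphi_i / 2).
\end{equation*}
Since $\A$ is homogeneous of degree two under the scaling $R \mapsto tR$, which acts on $\C_n$ and strictly increases $\A$, the maximum is meaningful only transversally to this direction; I will show that $\A$ restricted to the perimeter slice $\{\L = L\}$ (equivalently, the scale-invariant functional $\A / \L^2$) has a non-degenerate maximum at $P$.

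The next step is to eliminate $R$. Solving $\L = L$ gives $R = L \big/ \big(2 \sum_i \sin(\varphi_i / 2)\big)$, and substituting turns the restricted area into
\begin{equation*}
\A\big|_{\{\L = L\}} = \frac{L^2}{8}\, G(\varphi), \qquad G(\varphi) = \frac{\sum_i \sin \varphi_i}{\big(\sum_i \sin(\varphi_i / 2)\big)^2},
\end{equation*}
a function of $\varphi$ alone on the hyperplane $\sum_i \varphi_i = 2\pi w_P$, whose tangent space is $V = \{\psi : \sum_i \psi_i = 0\}$ of dimension $n - 1$. Because $G > 0$ near $P$, it suffices to analyse $\log G = \log u - 2\log v$, where $u = \sum_i \sin \varphi_i$ and $v = \sum_i \sin(\varphi_i / 2)$; at a critical point $\Hess \log G = G^{-1} \Hess G$, so the two Hessians share a signature.

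The decisive computation is the Hessian of $\log G$ on $V$. One finds $\partial_i \log G = \cos \varphi_i / u - \cos(\varphi_i / 2) / v$, which at $P$ is independent of $i$; hence $d(\log G)$ is proportional to $d\big(\sum_i \varphi_i\big)$ and vanishes on $V$, so $P$ is critical. Differentiating once more, the Hessian splits into a diagonal part with entries $-\sin \varphi_i / u + \sin(\varphi_i / 2) / (2v)$ and two rank-one corrections, with $(i,j)$-entries proportional to $\cos \varphi_i \cos \varphi_j / u^2$ and to $\cos(\varphi_i / 2)\cos(\varphi_j / 2) / v^2$. At the symmetric point $P$ all $\varphi_i$ are equal, so both corrections are scalar multiples of the all-ones matrix $J$, whose quadratic form is $\psi \mapsto \big(\sum_i \psi_i\big)^2$ and therefore vanishes identically on $V$. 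Thus on $V$ only the diagonal part survives, and with $u = n \sin \varphi$, $v = n \sin(\varphi / 2)$ its entry equals $-\tfrac{1}{n} + \tfrac{1}{2n} = -\tfrac{1}{2n}$. Hence $\Hess_P(\log G)\big|_V = -\tfrac{1}{2n}\,\mathrm{Id}$ is negative definite, and so is $\Hess_P \A\big|_{\{\L = L\}}$, which is the assertion.

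I expect the main obstacle to be conceptual rather than computational: one must first recognise that a local maximum on $\C_n$ is meaningful only after normalising the scaling direction along which $\A$ grows without bound, and choose the slice (fixed perimeter) on which $\A$ is actually bounded. Once the problem is posed on the hyperplane $\sum_i \varphi_i = 2\pi w_P$, the real point is the exact vanishing of the rank-one corrections on $V$, which reduces the Hessian to a scalar multiple of the identity and makes definiteness visible. The hypotheses enter precisely here: $w_P > 0$ together with $P$ not being a complete fold forces $\varphi \in (0, \pi)$ strictly, keeping $u, v > 0$ and bounding the diagonal entry away from $0$; the excluded value $\varphi = \pi$ is exactly the complete fold, at which $u = 0$ and the functional degenerates.
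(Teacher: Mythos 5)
Your proof is correct, and the key steps check out. The chart $(\varphi_1, \ldots, \varphi_n, R)$ with $\sum_i \varphi_i = 2\pi w_P$ is legitimate near $P$ by Lemma~\ref{parametrisation of cyclic polygons} (which requires only three distinct vertices, so it covers stars with $\gcd(n, w_P) > 1$, where non-consecutive vertices coincide); since the chart is affine in $\varphi$, restricting the ambient Hessian of $\log G$ to $V = \{\sum_i \psi_i = 0\}$ needs no correction terms; $\partial_i \log G = \cos\varphi_i/u - \cos(\varphi_i/2)/v$ is indeed independent of $i$ at the regular star; the two rank-one corrections are scalar multiples of the all-ones matrix at $P$ and so vanish identically on $V$; and the surviving diagonal entry is $-1/n + 1/(2n) = -1/(2n)$, giving negative definiteness on the $(n-1)$-dimensional slice. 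You were also right to insist on the normalisation: read literally against \eqref{definition of the configuration space of cyclic polygons}, where $\C_n$ is not quotiented by scaling, $\A$ cannot have a local maximum at $P$ at all (scaling $R$ increases it), so the operative statement --- the one meant in \cite{KPS18}, where every polygon has perimeter $L$, and the one actually used here through $\C^P = \M_{sm}(\N) \cap \C$ in Lemma~\ref{orthogonality of cyclic and linkages} --- is the fixed-perimeter version you proved, on a slice of the correct dimension $n - 1 = n - k$ for $k = 1$.

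As for comparison: the paper contains no proof of this lemma --- it is imported verbatim from \cite{KPS18} (their Lemma 2) --- so there is strictly nothing to match your argument against. The nearest in-paper relative is the proof of Lemma~\ref{Morse index for cyclic} in Section~\ref{Addenda}, the `two consecutive fixed beads' analogue on $\M_{sm}((n, L), (1, l)) \cap \C_{n+1}$. There the second constraint cannot be solved for explicitly, so the paper penalises: it maximises $G(Q) = 2\A/l_{n+1}^2 - \lambda(\cdot) - \mu(\cdot)^2$, computes 2-jets in $\R[T_1, T_2]/(T_1^3, T_2^2, T_1T_2)$, tunes $\lambda$ to kill the $T_1$-term and takes $\mu \gg 0$, with the non-bifurcating hypothesis ($nx \ne y$) making the $T_1^2$-direction controllable. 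Your computation is exactly the $k = 1$ degeneration in which the single constraint $\L_1 = L$ can be solved for $R$, so no multipliers arise and no non-bifurcation hypothesis is needed; your observation that the rank-one corrections vanish on $V$ plays the same structural role as the paper's reduction to the symmetric invariants $T_1, T_2$ with the $T_1$-contributions annihilated on the constraint hyperplane, and in both arguments definiteness is carried by the negative coefficient of $T_2$ (your diagonal entry $-1/(2n)$; the paper's $-x(1+y^2)/\bigl(8y^2(1+x^2)\bigr)$). Your route buys a shorter, multiplier-free and self-contained proof of this particular lemma; the paper's jet formalism is what scales to the genuinely constrained case.
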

In fact, this lemma together with Theorem~\ref{Morse index for linkages} and Lemma~\ref{orthogonality of cyclic and linkages} is enough to omit the assumptions in (4)~of~Theorem~\ref{all about polygons with fixed perimeter}.

\section{Singular locus of the configuration space}
\label{Discussion on singular locus of the configuration space}
\begin{defn} \label{definition of singular and non-singular configurations}
Let $P$ be a configuration of necklace $\N = \big((n_1, L_1), \ldots, (n_k, L_k)\big)$. It is called {\itshape non-singular} if $\L = (\L_1, \ldots, \L_k)$ is a smooth submersion at $P$ (i. e. $\L$ is differentiable at $P$ and its differential $D_P\L \colon T_P\Poly_n \to T_P\R^k$ is a surjective linear map), otherwise it is called {\itshape singular}. 
\end{defn}

First we give a geometric characterisation of singular configurations. Consider a polygon ${P = (p_1, \ldots, p_n)}$, with $p_i = (x_i, y_i) \in \R^2$ and $l_i = |p_{i + 1} - p_i| \ne 0$ for all $i = 1, \ldots, n$ and define $\beta_i$ to be the oriented angle between vectors $(1, 0)$ and ${(p_{i + 1} - p_i)}$. We also denote by $s(j) = n_1 + \ldots + n_{j - 1} + 1$ the index of the $j$-th fixed bead. Then $\L_j$ are differentiable at $P$ and the derivatives of $\L_j$ with respect to $x_i$ and $y_i$ look as follows:

\begin{align}
\frac{\partial \L_j}{\partial x_i}(P) &= \left\{ \begin{aligned}
&- \cos \beta_i,  &\text{if} \q &i = s(j);\\
&\cos \beta_{i - 1} - \cos \beta_i, &\text{if} \q &i \in j^*\setminus\{s(j)\};\\
&\cos \beta_{i - 1},  &\text{if} \q &i = s(j + 1);\\
&0,  &\text{otherwise.} &
\end{aligned}
\right. \label{partial derivative of L wrt x} \\
\frac{\partial \L_j}{\partial y_i}(P) &= \left\{ \begin{aligned}
&- \sin \beta_i,  &\text{if} \q &i = s(j);\\
&\sin \beta_{i - 1} - \sin \beta_i, &\text{if} \q &i \in j^* \setminus\{s(j)\};\\
&\sin \beta_{i - 1},  &\text{if} \q &i = s(j + 1);\\
&0,  &\text{otherwise.} &
\end{aligned}
\right.  \label{partial derivative of L wrt y}
\end{align}

The indices of the form $s(j)$ for $j = 1, \ldots, k$ will be called {\itshape boundary} and all other indices will be called {\itshape inner}. We now establish a criterion for a configuration of a necklace to be singular.

\begin{lem} \label{criterion for being singular configuration}
A configuration $P \in \Poly_n$ of the necklace $\N = \big((n_1, L_1), \ldots, (n_k, L_k)\big)$ is singular if and only if one of the following holds:
\begin{enumerate}
\item $l_i = 0$ for some $i = 1, \ldots, n$;
\item $P$ fits in a straight line in such a way that $\beta_i = \beta_{i - 1}$ for all inner indices $i$.
\end{enumerate}
\end{lem}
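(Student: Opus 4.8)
The plan is to unwind Definition~\ref{definition of singular and non-singular configurations}: $P$ is singular precisely when $\L = (\L_1, \ldots, \L_k)$ fails to be a submersion at $P$, and this failure has two possible sources --- either $\L$ is not differentiable at $P$, or it is differentiable but $D_P\L$ drops rank below $k$. I would treat these two sources separately and match them to conditions (1) and (2) respectively.

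For the differentiability source, observe that $l_i = |p_{i + 1} - p_i|$ is smooth wherever it is nonzero and has a corner at $l_i = 0$. I would show that if $l_i = 0$ for some $i \in j^*$, then $\L_j$ is not differentiable at $P$: restricting $\L_j$ to the one-parameter family translating the single vertex $p_{i + 1}$ along a fixed direction $v$, the restriction takes the form $t \mapsto c|t| + h(t)$ with $c > 0$ and $h$ smooth near $0$; the coefficient $c$ only increases if a neighbouring edge degenerates as well, so the corners never cancel. Hence the restriction is non-differentiable at $t = 0$, and so is $\L_j$. Conversely, if all $l_i \neq 0$ then each $l_i$, and therefore $\L$, is smooth near $P$. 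This identifies the non-differentiable case exactly with (1).

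Now assume all $l_i \neq 0$, so that $\L$ is differentiable with partials \eqref{partial derivative of L wrt x}--\eqref{partial derivative of L wrt y}. Writing $u_i = (\cos\beta_i, \sin\beta_i)$ for the unit direction of the $i$-th side, the $\R^2$-block of $\grad\L_j$ at vertex $m$ is $-u_{s(j)}$ for $m = s(j)$, is $u_{s(j + 1) - 1}$ for $m = s(j + 1)$, is $u_{m - 1} - u_m$ at an inner vertex $m$ of the $j$-th piece, and is $0$ otherwise. Non-surjectivity of $D_P\L$ is equivalent to the existence of scalars $c_1, \ldots, c_k$, not all zero, with $\sum_j c_j \grad\L_j = 0$. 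Reading this relation block by block yields $c_j(u_{m - 1} - u_m) = 0$ at every inner vertex $m$ of piece $j$, and $c_{j - 1} u_{s(j) - 1} = c_j u_{s(j)}$ at every boundary vertex $s(j)$. The implication (2) $\Rightarrow$ singular is then immediate: if $P$ lies on a line with $\beta_m = \beta_{m - 1}$ at every inner index, each piece has a single direction $u^{(j)} = \eps_j u$ for a common unit vector $u$ and signs $\eps_j \in \{\pm 1\}$, so $\grad\L_j$ has blocks $-\eps_j u$ at $s(j)$ and $\eps_j u$ at $s(j + 1)$ and nothing else; choosing $c_j = \eps_j$ makes $\sum_j c_j \grad\L_j$ telescope to zero around the cyclic sequence of fixed beads, a nontrivial dependence.

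For the converse, singular $\Rightarrow$ (2), I would exploit that the $u_i$ are unit vectors: the boundary equations $c_{j - 1} u_{s(j) - 1} = c_j u_{s(j)}$ give $|c_{j - 1}| = |c_j|$ for every $j$, so all the $c_j$ share one modulus, which must be nonzero since the relation is nontrivial. With every $c_j \neq 0$, the inner equations force $u_{m - 1} = u_m$, i.e. $\beta_{m - 1} = \beta_m$, at every inner vertex --- the angle condition in (2) --- and the boundary equations then give $u^{(j)} = \pm u^{(j - 1)}$, so all edges are parallel and $P$ collapses onto a single line. The main obstacle is exactly this forcing of all $c_j$ to be nonzero: it is the unit-length normalisation of the $u_i$ (hence the cyclic chain $|c_1| = \ldots = |c_k|$) that lets the inner-vertex equations take effect, and getting this bookkeeping right --- together with the no-cancellation check for several simultaneously vanishing edges in the differentiability step --- is where the care is needed.
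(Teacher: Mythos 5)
Your proposal is correct and follows essentially the same route as the paper's proof: identify condition (1) with failure of differentiability of $\L$, then, when all sides are nonvanishing, characterise rank deficiency of $D_P\L$ via a nontrivial vanishing linear combination of the gradients, reading it block by block at inner and boundary vertices (your norm argument $|c_{j-1}| = |c_j|$ is just a compact repackaging of the paper's propagation of $\lambda_j \ne 0$ around the cycle). The only difference is that you supply the corner-type argument for non-differentiability at a vanishing side, which the paper asserts without proof.
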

\begin{proof}
The first condition is equivalent to $\L$ being differentiable at $P$. Therefore, what is left to prove, is that for $P \in \tilde \M(\N)$ with no vanishing sides, the second condition hold if and only if the gradients $\grad_P \L_1, \ldots, \grad_P \L_k$ are linearly dependent.   

Suppose that $\lambda_1\grad_P \L_1 + \ldots + \lambda_k \grad_P \L_k = 0$ is a non-trivial vanishing linear combination. If $\lambda_j \ne 0$, then, using formulae \eqref{partial derivative of L wrt x} and \eqref{partial derivative of L wrt y} for boundary index $s(j)$, we get $-\lambda_j \cos \beta_{s(j)} + \lambda_{j - 1} \cos \beta_{s(j) - 1} = 0$ and $-\lambda_j \sin \beta_{s(j)} + \lambda_{j - 1} \sin \beta_{s(j) - 1} = 0$. It means that points $\lambda_j(\cos \beta_{s(j)}, \sin \beta_{s(j)}) \ne (0, 0)$ and $\lambda_{j - 1} (\cos \beta_{s(j)-1}, \sin \beta_{s(j)-1})$ coincide, which implies that $2(\beta_{s(j)} - \beta_{s(j) - 1}) = 0$ and ${\lambda_{j - 1} = \cos(\beta_{s(j)} - \beta_{s(j) - 1})\lambda_{j}} \ne 0$. It follows then that $\lambda_j \ne 0$ for all $j = 1, \ldots, k$, consequently, (we now use \eqref{partial derivative of L wrt x} and \eqref{partial derivative of L wrt y} for inner indices) $\beta_i = \beta_{i - 1}$ for all inner indices $i$, meaning that $P$ is composed of straight segments of lengths $L_1, \ldots, L_k$. Taking in account previously deduced formula $2(\beta_{i} - \beta_{i - 1} ) = 0$ for boundary $i$, we get that $P$ does satisfies condition (2). Reversing the above argument, we get the reverse implication.
\end{proof}

Now let $\tilde{\M_{sm}}(\N)$ be the set of non-singular configurations of necklace $\N$ and $\M_{sm}(\N)$ be the non-singular part of $\M(\N)$:
\begin{equation}
\M_{sm}\big(\N) = \frac{\tilde{\M_{sm}}(\N)}{\Isom_+(\R^2)} = \left.\left\{P \in \Poly_n \left| \begin{aligned}
P \text{ is a non-singular}\\
\text{configuration of } \N
\end{aligned} \right.\right\}\right/\Isom_+(\R^2)
\end{equation}

If these spaces are non-empty, they are smooth manifolds, which generalises previous results on smoothness of configuration spaces of polygonal linkages by Kapovich---Millson~\cite{KM95} and Farber~\cite{F08}. To state the precise result, we need the following 
\begin{defn} \label{definition of a realisable necklace}
A necklace $\N = ((n_1, L_1), \ldots, (n_k, L_k)\big)$ is called {\itshape realisable} if for all $j = 1, \ldots, k$, such that $n_j = 1$, inequality $2L_j < L_1 + \ldots + L_k$ holds.
\end{defn}

\begin{prop} \label{smooth piece of configuration space of necklace}
Let $\N$ be a realisable necklace. Then
\begin{enumerate}
\item $\tilde{\M_{sm}}(\N)$ is a smooth submanifold of dimension $2n - k$ in $\Poly_n = \R^{2n}$;
\item $\M_{sm}(\N)$ is a topological manifold of dimension $2n - k - 3$ with a unique smooth structure making the quotient map $\tilde{\M_{sm}}(\N) \to \M_{sm}(\N)$ a smooth submersion;
\item the oriented area function $\A$ is a smooth function on $\M_{sm}(\N)$.
\end{enumerate}
\end{prop}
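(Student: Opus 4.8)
The plan is to handle the three statements in turn, using the regular value theorem for (1), the quotient manifold theorem for a free and proper Lie group action for (2), and the universal property of surjective submersions for (3). The whole argument is an application of standard differential-topological machinery, with the single genuinely non-formal input being the properness check in (2).

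For (1), I would first observe that by Lemma~\ref{criterion for being singular configuration} the set $\tilde{\M_{sm}}(\N)$ is contained in the open set $U \subset \Poly_n$ on which all side lengths are nonzero; on $U$ the map $\L = (\L_1, \ldots, \L_k)$ is genuinely smooth, with partial derivatives given by \eqref{partial derivative of L wrt x} and \eqref{partial derivative of L wrt y}. Non-singularity says precisely that $\D_P\L$ has rank $k$, so $\tilde{\M_{sm}}(\N)$ is exactly the intersection of $\L^{-1}(L_1, \ldots, L_k)$ with the open set of regular points of $\L$ inside $U$. Every point of this intersection is by construction a regular point, so $(L_1, \ldots, L_k)$ is a regular value of the restricted map, and the regular value theorem yields a smooth submanifold of codimension $k$, i.e. of dimension $2n - k$. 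Realisability enters here only to guarantee non-emptiness: when $n_j = 1$ the $j$-th piece is a rigid segment of length $L_j$, and the inequality $2L_j < L$ is the usual polygon inequality that lets the closed polygon be assembled in non-collinear, hence non-singular, position.

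For (2), I would invoke the quotient manifold theorem, for which it suffices to check that the $3$-dimensional Lie group $\Isom_+(\R^2)$ acts freely and properly on the manifold $\tilde{\M_{sm}}(\N)$ produced in (1). Freeness is the easy half: an orientation-preserving isometry fixing a polygon must be a rotation fixing every vertex, which forces all vertices to coincide with its centre; but a non-singular configuration has $l_1 \ne 0$, so $p_1 \ne p_2$ and no nontrivial isometry can fix it. Properness is the step I expect to be the main obstacle, since the action on all of $\Poly_n$ fails to be proper near totally degenerate polygons. The point is that on a compact subset $K$ of $\tilde{\M_{sm}}(\N)$ the continuous positive function $l_1$ is bounded below by some $\delta > 0$; writing an isometry as $p \mapsto R_\theta p + v$, the requirement that $P$ and $g \cdot P$ both lie in a fixed compact set bounds $v$ through the image of $p_1$, while $R_\theta$ ranges in the compact group $SO(2)$, so the relevant group elements remain in a compact set. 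Freeness and properness then give the orbit space the structure of a smooth manifold of dimension $2n - k - 3$, with the stated uniqueness and with $\tilde{\M_{sm}}(\N) \to \M_{sm}(\N)$ a smooth submersion.

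For (3), I would note that $\A \colon \Poly_n \to \R$ is a quadratic polynomial, hence smooth, and is $\Isom_+(\R^2)$-invariant, as remarked after Definition~\ref{definition of oriented area}. Its restriction to $\tilde{\M_{sm}}(\N)$ is therefore smooth and constant on the fibres of the submersion $\pi \colon \tilde{\M_{sm}}(\N) \to \M_{sm}(\N)$; since $\pi$ is a surjective submersion, the induced function on $\M_{sm}(\N)$ is smooth by the universal property. Thus the only calculation requiring care is the properness verification in (2), and everything else reduces to the regular value and quotient manifold theorems.
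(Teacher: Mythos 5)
Your proof is correct and follows essentially the same route as the paper: (1) as a level set of a smooth submersion, (2) via the quotient manifold theorem for the free and proper $\Isom_+(\R^2)$-action, and (3) by descent of an invariant smooth function along the quotient submersion; you merely spell out the freeness and properness checks that the paper leaves implicit ("which is indeed the case"). One aside in your write-up is wrong, though harmless: the $\Isom_+(\R^2)$-action on all of $\Poly_n$ is in fact proper (any single vertex pins down the translation part, exactly as in your own argument), and what fails at totally degenerate polygons is freeness, not properness.
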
 
\begin{proof}
It follows from Lemma~\ref{criterion for being singular configuration}, that the inequalities $2L_j < L_1 + \ldots + L_k$ are necessary and sufficient for $\tilde{\M_{sm}}(\N)$ to be non-empty. 

The first claim is clear since $\tilde{\M_{sm}}(\N)$ is locally a level of a smooth submersion ${\L = (\L_1, \ldots, \L_k)\colon \left(\R^2\right) ^ n \to \R^k}$.

To establish the second claim, we first note that $\M_{sm}(\N)$ is an orbit space of the action of 3-dimensional Lie group $\Isom_+(\R^2)$ on smooth manifold $\tilde{\M_{sm}}(\N)$. Thus, it suffices to observe that the action is free and proper, which is indeed the case.

The third claim is obvious since the smooth structure on $\M_{sm}(\N)$ is induced from $\Poly_n$ and oriented area $\A$ (see Definition~\ref{definition of oriented area}) is a smooth function on $\Poly_n$ preserved by the action of $\Isom_+(\R^2)$.
\end{proof}

\section{Main results: critical configurations and their Morse indices \\in the general case} 
\label{Main results} 
The first theorem describes critical points of oriented area on configuration spaces of necklaces, generalising Theorem~\ref{critical points for linkages} and (2)~in~Theorem~\ref{all about polygons with fixed perimeter}. Recall that $j^*$ is the set of indices corresponding to the $j$-th piece of a necklace (Definition~\ref{definition of j^*}) and $\eps_i(P)$ is an orientation of the $i$-th side of a cyclic polygon $P$ (Definition~\ref{definition of orientation of a side}).
\begin{thm}[{\bfseries Critical configurations in the general case}] \label{critical points}~\\
A polygon $P \in \M_{sm}\big((n_1, L_1), \ldots, (n_k, L_k)\big)$ is a critical point of $\A$ if and only if all of the following conditions hold:
\begin{enumerate}
\item $P$ is cyclic;
\item $\forall i \in j^*\colon l_i(P) = L_j/n_j$;
\item $\forall i_1, i_2 \in j^*\colon \eps_{i_1}(P) = \eps_{i_2}(P)$.
\end{enumerate}
\end{thm}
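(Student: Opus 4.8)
The plan is to use Lagrange multipliers. Since $P \in \M_{sm}(\N)$ is a non-singular configuration, the gradients $\grad_P \L_1, \ldots, \grad_P \L_k$ are linearly independent (Lemma~\ref{criterion for being singular configuration}), so $P$ is a critical point of $\A$ restricted to $\tilde\M(\N)$ if and only if
$$
\grad_P \A = \sum_{j = 1}^k \lambda_j \grad_P \L_j
$$
for some reals $\lambda_1, \ldots, \lambda_k$. Passing to the quotient $\M_{sm}(\N)$ introduces no new critical points since $\A$ and the $\L_j$ are $\Isom_+(\R^2)$-invariant. The strategy is to write out this vector equation componentwise at each vertex $p_i = (x_i, y_i)$ and read off exactly conditions (1)--(3).

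**First I would** compute $\grad_P \A$. A direct differentiation of Definition~\ref{definition of oriented area} gives
$$
\frac{\partial \A}{\partial x_i}(P) = \frac{y_{i + 1} - y_{i - 1}}{2}, \qquad \frac{\partial \A}{\partial y_i}(P) = \frac{x_{i - 1} - x_{i + 1}}{2},
$$
so $\grad_{p_i}\A = \tfrac{1}{2}(p_{i - 1} - p_{i + 1})^\perp$, where $v^\perp$ denotes the $90^\circ$ rotation. Combining with the formulae \eqref{partial derivative of L wrt x}--\eqref{partial derivative of L wrt y} for $\grad_P \L_j$, and writing $u_i = (\cos\beta_i, \sin\beta_i)$ for the unit vector along the $i$-th side, the multiplier equation at an \emph{inner} vertex $i \in j^* \setminus \{s(j)\}$ becomes
$$
\frac{1}{2}(p_{i - 1} - p_{i + 1})^\perp = \lambda_j (u_{i - 1} - u_i),
$$
while at a \emph{boundary} vertex $i = s(j)$ (which is also $s(j) = s((j-1)+1)$) the two adjacent pieces contribute and one gets $\tfrac12(p_{i-1} - p_{i+1})^\perp = \lambda_{j-1} u_{i - 1} - \lambda_j u_i$.

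**The geometric core** is the interpretation of the inner-vertex equations. Rewriting $p_{i-1} - p_{i+1} = -l_{i-1} u_{i-1} - l_i u_i$ and applying the rotation, the inner equation says $-l_{i-1} u_{i-1}^\perp - l_i u_i^\perp = 2\lambda_j(u_{i-1} - u_i)$. The plan is to pair this relation against $u_{i-1} + u_i$ and against $u_{i-1} - u_i$ to separate information: one pairing forces $l_{i-1} = l_i$ (so all sides within the $j$-th piece share a common length, which by $\sum_{i \in j^*} l_i = L_j$ must equal $L_j/n_j$, giving condition (2)), and the other identifies the bending at each inner vertex. The cleanest route is to show that each inner equation is equivalent to asserting that $p_{i-1}, p_i, p_{i+1}$ all lie on a common circle whose center is determined by $\lambda_j$ and is the \emph{same} for every vertex of the whole polygon: indeed $\tfrac12(p_{i-1} - p_{i+1})^\perp$ is the gradient of the area of triangle $op_ip_{i+1}$-type contributions, and $\lambda_j \grad_P \L_j$ points along the angle bisector directions, so the multiplier equation is precisely the statement that a fixed point $o$ (with $|op_i| = 2|\lambda_j|$ on the $j$-th piece) is equidistant from consecutive vertices. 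Establishing that the center $o$ is forced to be common across all pieces — this is where the boundary-vertex equations are used — simultaneously yields cyclicity (condition (1)) and the radius relation $R = 2|\lambda_j|$ independent of $j$; the sign of $\lambda_j$ then records the common orientation $\eps_i$ on the piece (condition (3)), since $l_i = 2R\sin\alpha_i$ and $\lambda_j$ has a fixed sign throughout $j^*$.

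**The main obstacle** I expect is the bookkeeping at the boundary vertices and showing the circumscribed circle is genuinely global rather than piecewise. The inner equations alone pin down, for each piece $j$, a candidate center $o_j$ equidistant from all vertices indexed by $j^* \cup \{s(j+1)\}$; the boundary equation at $s(j)$ must then be shown to force $o_{j-1} = o_j$. The delicate point is handling degenerate sub-configurations (collinear triples, or $\lambda_j = 0$) without dividing by zero, and verifying that non-singularity of $P$ (which excludes vanishing sides and the fully-collinear case of Lemma~\ref{criterion for being singular configuration}) rules out the genuinely problematic cases so that the equidistance argument goes through. Conversely, I would check that any $P$ satisfying (1)--(3) solves the multiplier system by reversing these computations, taking $\lambda_j = \pm R/2$ with sign equal to the common orientation on the $j$-th piece.
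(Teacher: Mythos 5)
Your setup is the same as the paper's (Lagrange multipliers for $\A$ subject to $\L_1,\ldots,\L_k$, the componentwise equations at inner and boundary vertices, and the inner-vertex analysis forcing equal side lengths and equal turning angles inside each piece), but the step on which your whole cyclicity argument and your converse rest contains a concrete quantitative error. For a piece whose vertices lie on a circle of radius $R$ with oriented central angle $\gamma_j$ per edge, the inner equations force, in your normalisation,
$$
\lambda_j \;=\; \tfrac{1}{2}\, l_i \cot\!\left(\tfrac{\beta_i - \beta_{i-1}}{2}\right) \;=\; \pm R\cos\!\left(\tfrac{\gamma_j}{2}\right),
$$
with sign equal to the common orientation: the multiplier is the \emph{signed apothem} of the $j$-th piece (the signed distance from the centre to its edges), not half the circumradius. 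In particular $|\lambda_j|$ is \emph{not} independent of $j$, since different pieces subtend different central angles and hence have different apothems. Consequently the claims ``$|op_i| = 2|\lambda_j|$'', ``the radius relation $R = 2|\lambda_j|$ independent of $j$'', and the converse prescription ``take $\lambda_j = \pm R/2$'' are all false, and the converse verification would not close with that choice. (Your plan is salvageable with the correct identification, and then becomes very clean: applying a $90^\circ$ rotation to the multiplier equation at any vertex $p_i$ shows it is \emph{equivalent} to the coincidence of the two candidate centres $p_i - \tfrac{l_{i-1}}{2}u_{i-1} + \lambda u_{i-1}^\perp$ and $p_i + \tfrac{l_i}{2}u_i + \lambda' u_i^\perp$ built from the two adjacent edges with their respective multipliers; chaining these coincidences around the polygon produces a single point equidistant from all vertices, and the equidistance even re-proves $l_{i-1}=l_i$. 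But that is a repaired argument, not the one you wrote.)

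The second gap is the degenerate case. You hope that non-singularity ``rules out the genuinely problematic cases'', but it does not: condition (2) of Lemma~\ref{criterion for being singular configuration} only excludes configurations lying on a line with $\beta_i = \beta_{i-1}$ at \emph{every} inner index, so a configuration zig-zagging along a line (for instance a complete fold) is non-singular and can be critical, and for it your piecewise circumcentres $o_j$ are not even defined (the pieces are collinear and $\lambda_j = 0$). The paper's own proof sidesteps both of your obstacles at once: a critical point of $\A$ on $\M_{sm}(\N)$ is a fortiori a critical point of $\A$ restricted to the linkage subspace of polygons with the same side lengths, so cyclicity follows from Theorem~\ref{critical points for linkages} whenever $P$ does not fit in a straight line, and the straight-line case is settled by a separate boundary-vertex computation showing that $P$ must then be a complete fold, hence cyclic. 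Either import that shortcut, or carry out your centre-matching with the apothem interpretation (which, being a division-free linear-algebra identity, absorbs the collinear case as well); as written, the central step and the converse of your proposal both fail.
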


Before proving the theorem, let us make two remarks.
\begin{rem}
The statement of the theorem in plane English goes as follow: {\itshape a non-singular configuration of a necklace is a critical point of oriented area if and only if all the beads lie on some circle in such a way that the arc between every two consecutive fixed beads is evenly divided by the moving beads between them.} 
\end{rem}
\begin{rem}
The proof essentially is a reformulation of geometric arguments into the language of Lagrange multipliers, so we first write partial derivatives of $\A$ with respect to $x_i$ and $y_i$:
\begin{align}
2 \cdot \frac{\partial \A}{\partial x_i}(P) &= l_{i - 1}\sin \beta_{i - 1} + l_i\sin \beta_i \label{partial derivative of A wrt x} \\
2 \cdot \frac{\partial \A}{\partial y_i}(P) &= -l_{i - 1}\cos \beta_{i - 1} - l_i\cos \beta_{i} \label{partial derivative of A wrt y}
\end{align}
We follow the convention ${0 \cdot \text{undefined} = 0}$ hence both sides are defined for all $P \in \Poly_n$. 
\end{rem}

\begin{proof}[Proof of Theorem~\ref{critical points}]
Let $P$ be a non-singular configuration of necklace ${\N = \big((n_1, L_1), \ldots, (n_k, L_k)\big)}$. Then $P$ is a critical point of $\A$ if and only if there exist $\lambda_1, \ldots, \lambda_k \in \R$, such that ${2\grad_P \A = \lambda_1 \grad_P \L_1 + \ldots + \lambda_k \grad_P \L_k}$. 

Assume that $2\cdot \grad_P \A = \lambda_1 \grad_P \L_1 + \ldots + \lambda_k \grad_P \L_k$. Then for inner index $i$ corresponding to the $j$-th piece of $\N$ we have
\begin{align*}
l_{i - 1} \sin \beta_{i - 1} + l_i \sin \beta_i = \lambda_j\left(\cos \beta_{i - 1} - \cos \beta_i \right);\\
-l_{i - 1} \cos \beta_{i - 1} - l_i \cos \beta_i = \lambda_j\left(\sin \beta_{i - 1} - \sin \beta_i \right).
\end{align*}
If $\beta_i = \beta_{i - 1}$, then $l_i = l_{i - 1} = 0$, but $P$ is non-singular, so it cannot be the case by Lemma~\ref{criterion for being singular configuration}. The only other possibility for this equations to hold is $l_{i - 1} = l_i$ and $\lambda_j = l_i\cot\left(\frac{\beta_i - \beta_{i - 1}}{2}\right)$. Since we have such equations for all inner indices corresponding to $j$, we get that $\forall i_1, i_2 \in j^*\colon l_{i_1} = l_{i_2}$, which implies condition (2) of the theorem. Moreover, for all $i \in j^*\setminus s(j)$ we get $\cot\left(\frac{\beta_i - \beta_{i - 1}}{2}\right) = \frac{n_j\lambda_j}{L_j}$, therefore $\beta_i - \beta_{i - 1}$ is the same for all $i \in j^*\setminus s(j)$, which implies that there is a circle $\Omega_j$ with centre $o_j$ such that conditions (2) and (3) of the theorem hold. It now remains to prove that $P$ is cyclic, i. e. $o_j$ is the same for all $j = 1, \ldots, k$. If $P$ is a smooth point of $\M_{sm}((1, l_1), \ldots, (1, l_n)) \subset \M_{sm}\big(\N\big)$, in other words, if $P$ does not fit in a straight line, then we are done by Theorem~\ref{critical points for linkages}. Suppose that $P$ fits in a straight line. Pick a boundary vertex $i = s(j + 1)$ and denote $l^j = L_j/n_j$. We have
\begin{align*}
l^j \sin \beta_{i - 1} + l^{j + 1}\sin \beta_i = \lambda_j\cos \beta_{i - 1} - \lambda_{j + 1}\cos \beta_i;\\
-l^j \cos \beta_{i - 1} - l^{j + 1}\cos \beta_i = \lambda_j\sin \beta_{i - 1} - \lambda_{j + 1}\sin \beta_i.
\end{align*}
Since $P$ fits in a straight line, $2(\beta_i - \beta_{i - 1}) = 0$. If $\beta_{i - 1} = \beta_i = \beta$, then the points $(l^j + l^{j + 1})(\cos \beta, \sin \beta)$ and $(\lambda_j - \lambda_{j + 1})(\cos (\beta + \pi/2), \sin (\beta + \pi/2))$ coincide which cannot be the case since $l^j, l^{j + 1} > 0$. If $\beta_{i-1} = \beta_i + \pi = \beta+\pi$, then the points $(l^j - l^{j + 1})(\cos \beta, \sin \beta)$ and $(\lambda_j + \lambda_{j + 1})(\cos (\beta + \pi/2), \sin (\beta + \pi/2))$ coincide, which implies that $l^j = l^{j + 1}$. Since this is the case for all $j$, $P$ is a complete fold and thus indeed is cyclic. 

Now assume that a non-singular configuration $P$ of necklace $\N$ satisfies conditions (1)--(3). Let $\Omega$ be its circumscribed circle with centre $o$. Denote by $\gamma_j$ the oriented angle $\angle p_{s(j)op_{s(j) + 1}}$ and set $\lambda_j = l_i\cot\left(\gamma_j/2\right)$ for some index $i$ corresponding to $j$. Since $\gamma_j = \beta_i - \beta_{i - 1}$ for inner indices $i$,  equality $2\cdot \grad_P \A = \lambda_1 \grad_P \L_1 + \ldots + \lambda_k \grad_P \L_k$ holds in all inner indices. For a boundary index $i = s(j + 1)$ we can (performing rotation around $o$) assume, that $\beta_{i - 1} = 0$, and what we need to check then is the following two equalities:
\begin{align*}
l^{j + 1} \sin \beta_i &= l^j \cot(\gamma_{i - 1}/2) - l ^{j + 1} \cot (\gamma_i/2) \cos 
\beta_i;\\
-l ^ j - l ^{j + 1}\cos \beta_i &= -l^{j + 1} \cot(\gamma_i/2) \sin \beta_i,
\end{align*}
Putting the origin at $o$, we note that
$$
p_{i + 1} - p_i = l^{j + 1} \cdot (\cos \beta_i, \sin \beta_i), \q p_{i + 1} + p_i = l^{j + 1}\cot\frac{\gamma_i}{2} \cdot (-\sin \beta_i, \cos \beta_i), \q p_i = \left(\frac{l_j}{2}, -\frac{l_j}{2}\cot\frac{\gamma_{i - 1}}{2}\right),
$$
and thus the desired equalities are just the coordinate manifestations of the obvious identity
$$
\frac{p_{i + 1} - p_i}{2} - \frac{p_{i + 1} + p_i}{2} + p_i = (0, 0)
$$
\end{proof}

The following theorem provides a criterion for an admissible cyclic polygon to be a Morse point of oriented area and gives a formula for its Morse index. It generalises Theorem~\ref{Morse index for linkages} and allows one to omit the assumptions in (4)~of~Theorem~\ref{all about polygons with fixed perimeter}. 
\begin{thm}[{\bfseries Morse indices in the general case}] \label{Morse index}~\\
Let $\N = \big((n_1, L_1), \ldots, (n_k, L_k)\big)$ be a realisable necklace (see Definition~\ref{definition of a realisable necklace}), and $P \in \M_{sm}(\N)$ be an admissible (see Definition~\ref{definition of an admissible cyclic polygon}) critical point of oriented area $\A$.
Then $P$ is a Morse point of $\A$ if and only if it is not a bifurcating polygon (see Definition~\ref{definition of a bifurcating polygon}). In this case its Morse index can be computed as follows:
$$
\mu_P(\A) = \frac{1}{2}\sum_{j = 1}^k (2n_j - 1) \cdot (E_j + 1) - 1 - 2w_P - \left\{ \begin{aligned}
&0 \text{ if } \sum_{j = 1}^k n_jE_j\tan A_j > 0;\\
&1 \text{ otherwise,}
\end{aligned} \right.
$$ 
where $E_j = \eps_i$ and $A_j = \alpha_i$ for some $i \in j^*$ (due to Theorem~\ref{critical points} this does not depend on the choice of i).
\end{thm}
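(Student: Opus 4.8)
The plan is to reduce the computation of $\mu_P(\A)$ to the three auxiliary results by exhibiting a $\Hess_P\A$-orthogonal direct-sum decomposition of the tangent space and computing the index blockwise. First I would introduce the \emph{linkage subspace} $V_{\mathrm{link}} = T_P\M_{sm}\big((1, l_1), \ldots, (1, l_n)\big)$, the tangent space at $P$ to the configuration space of the polygonal linkage whose side lengths are frozen at the values $l_i = L_j/n_j$ forced by Theorem~\ref{critical points}. Since $P$ is admissible cyclic, it spans the plane and has no vanishing sides, so by Lemma~\ref{criterion for being singular configuration} it is a non-singular configuration of this linkage, $V_{\mathrm{link}}$ is genuinely $(n-3)$-dimensional, and by Theorem~\ref{critical points for linkages} it is also a critical point of $\A$ on the linkage space. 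For each piece $j$ I would then introduce the \emph{cyclic-piece subspace} $V_{\mathrm{cyc}, j}$ of dimension $n_j - 1$, spanned by the within-piece deformations that keep $P$ cyclic while sliding only the $n_j - 1$ interior beads of the $j$-th piece. A dimension count gives $\dim V_{\mathrm{link}} + \sum_{j} \dim V_{\mathrm{cyc}, j} = (n - 3) + (n - k) = 2n - k - 3 = \dim T_P\M_{sm}(\N)$ by Proposition~\ref{smooth piece of configuration space of necklace}, so once directness is checked these subspaces fill the whole tangent space.

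The key structural step is to invoke the orthogonality lemmata: Lemma~\ref{orthogonality of cyclic and linkages} makes $V_{\mathrm{link}}$ orthogonal to every $V_{\mathrm{cyc}, j}$ with respect to the symmetric bilinear form $\Hess_P\A$, and Lemma~\ref{orthogonality of cyclic pieces} makes the $V_{\mathrm{cyc}, j}$ pairwise orthogonal. Consequently $\Hess_P\A$ is block diagonal with respect to $V_{\mathrm{link}} \oplus \bigoplus_j V_{\mathrm{cyc}, j}$, so both its nullity and its negative index are additive over the blocks. Because $P$ is critical for $\A$ on each of the relevant submanifolds, the restriction of $\Hess_P\A$ to $V_{\mathrm{link}}$ is exactly the Hessian form governed by Theorem~\ref{Morse index for linkages}, and its restriction to $V_{\mathrm{cyc}, j}$ is exactly the one treated in Lemma~\ref{Morse index for cyclic}.

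With the blocks identified, the Morse criterion and the index formula follow by assembly. Lemma~\ref{Morse index for cyclic} shows each cyclic block is non-degenerate, contributing index $n_j - 1$ when $E_j = 1$ and $0$ when $E_j = -1$ (and no nullity), while Theorem~\ref{Morse index for linkages} shows the linkage block is non-degenerate precisely when $\sum_i \eps_i \tan \alpha_i \ne 0$; since all orientations and half-angles inside the $j$-th piece equal $E_j$ and $A_j$, this sum equals $\sum_j n_j E_j \tan A_j$, so additivity of the nullity yields that $P$ is Morse if and only if it is not bifurcating (Definition~\ref{definition of a bifurcating polygon}). For the index, I would add the linkage contribution $\#\{i : \eps_i > 0\} - 1 - 2w_P$ (minus the sign correction) to the cyclic contribution $\sum_{j : E_j = 1}(n_j - 1)$; rewriting $\#\{i : \eps_i > 0\} = \sum_j n_j \tfrac{E_j + 1}{2}$, the two combine to $\sum_{j : E_j = 1}(2n_j - 1) = \tfrac{1}{2}\sum_j (2n_j - 1)(E_j + 1)$, reproducing the stated formula together with its sign term.

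I expect the main obstacle to be twofold. First, the index bookkeeping must be done carefully: one has to check that the global terms $-1$, $-2w_P$, and the single sign correction all originate from the linkage block alone and are not duplicated by the cyclic blocks, while each cyclic block contributes either its full dimension $n_j - 1$ (when $E_j = 1$) or nothing (when $E_j = -1$). Second, one must justify honestly that $V_{\mathrm{link}}$ and the $V_{\mathrm{cyc}, j}$ are in direct sum and not merely of complementary total dimension; because $\Hess_P\A$ is not definite on $V_{\mathrm{link}}$, directness cannot be read off from the orthogonality lemmata alone and should instead be extracted from the explicit description of these subspaces as spans of concrete tangent vectors, after which blockwise additivity of index and nullity completes the proof.
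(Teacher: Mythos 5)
Your proposal is correct and follows essentially the same route as the paper: the decomposition $V_{\mathrm{link}} \oplus \bigoplus_j V_{\mathrm{cyc},j}$ is exactly the paper's $T_P\M_{sm} = T_P\E^P \oplus T_P\C^P = T_P\E^P \oplus \bigoplus_j T_P\C_j^P$, with the same blockwise use of Theorem~\ref{Morse index for linkages}, Lemma~\ref{Morse index for cyclic}, and the two orthogonality lemmata, and the same final bookkeeping. Note only that the directness you flag as a potential obstacle is not something you need to re-derive from explicit tangent vectors: it is already part of the cited statements, namely claim (3) of Lemma~\ref{orthogonality of cyclic and linkages} and claim (2) of Lemma~\ref{orthogonality of cyclic pieces}.
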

\begin{proof}
Let $P$ be as in the theorem. First, we can split the tangent space of $\M_{sm}(\N)$ at the critical point $P$ into subspaces that are orthogonal with respect to the Hessian form $\Hess_P \A$. For this, given a polygon $P$, we introduce the following submanifolds in $\M_{sm}(\N)$: 
\begin{enumerate}
\item $\E^P = \M_{sm}((1, l_1), \ldots, (n, l_n)) \subset \M_{sm}(\N)$ --- the space of all polygons having the same edge length as $P$;
\item $\C^P = \M_{sm}(\N) \cap \C$ --- the subspace of cyclic polygons;
\item 
$
\C_j^P = \left\{Q \in \M_{sm}(\N) \left| \begin{aligned} 
(q_{s(j)}, \ldots, q_{s(j + 1)}) \text{ is cyclic}\\
q_i = p_i \text{ for } i \notin j^* \setminus \{s(j)\}\\
\end{aligned} \right.\right\}
$
for $j = 1, \ldots, k$.
\end{enumerate}

We will deduce the theorem from the Lemmata \ref{orthogonality of cyclic and linkages}, \ref{orthogonality of cyclic pieces}, and \ref{Morse index for cyclic} (see Sections~\ref{Orthogonality with respect to the Hessian of oriented area}~and~\ref{Addenda} for their proofs).
\begin{lem} \label{orthogonality of cyclic and linkages}
Let $P$ be as in Theorem~\ref{Morse index}. Then
\begin{enumerate}
\item $\E^P$ around $P$ is a smooth submanifold in $\M_{sm}$ of dimension $n - 3$;
\item $\C^P$ around $P$ is a smooth submanifold in $\M_{sm}$ of dimension $n - k$;
\item $\E^P$ and $\C^P$ intersect transversally at $P$, i. e.
$
T_P \M_{sm} = T_P\E^P \oplus T_P\C^P;
$
\item $T_P\E^P$ and $T_P\C^P$ are orthogonal with respect to bilinear form $\Hess_P \A$.
\end{enumerate}
\end{lem}

One can note that none of $\C_j^P$ is contained in $\C^P$. Nonetheless, from the following lemma one sees that in the first approximation they very much are.
\begin{lem} \label{orthogonality of cyclic pieces}
Let $P$ be as in Theorem~\ref{Morse index}. Then
\begin{enumerate}
\item $\C^P_j$ around $P$ is a smooth submanifold in $\M_{sm}$ of dimension $n_j - 1$;
\item 
$$
T_P\C^P = \bigoplus_{j = 1}^k T_P\C^P_j
$$
\item $T_P\C_j^P$ are pairwise orthogonal with respect to bilinear form $\Hess_P \A$.
\end{enumerate}
\end{lem}

It remains to compute Morse index of $P$ with respect to $\A$ on each of $\C_j^P$.
\begin{lem} \label{Morse index for cyclic}
Suppose that $P \in \C_{n+1}$ is such that $l_1 = \ldots = l_n = L/n$ and $\eps_i = 1$ ($\eps_i = -1$) for $i = 1, \ldots, n$. Then $P$ is a non-degenerate local maximum (minimum) of oriented area on ${\M_{sm}((n, L), (1, l_n)) \cap \C_{n+1}}$.
\end{lem}

Now we are ready to prove the theorem. From Lemmata~\ref{orthogonality of cyclic and linkages}~and~\ref{orthogonality of cyclic pieces}, $P$ is a Morse point of $\A$ on $\M_{sm}$ if and only if it is a Morse point of $\A$ on $\E^P$ and all of $\C^P_j$. Since $P$ is always a Morse point on each of $\C^P_j$ (because by Lemma~\ref{Morse index for cyclic} it is a non-degenerate local extremum), it is a Morse point of $\A$ on $\M_{sm}$ if and only if it is a Morse point of $\A$ on $\E^P$, which is equivalent to $P$ not being bifurcating by Theorem~\ref{Morse index for linkages}.

Moreover, again using Lemmata~\ref{orthogonality of cyclic and linkages}~and~\ref{orthogonality of cyclic pieces}, we conclude that if $P$ is a Morse point of $\A$ on $\M_{sm}$, then its Morse index is
$$
\mu_P^{n_1, \ldots, n_k}(\A) = \mu_P^{\E^P}(\A) + \mu_P^{\C^P}(\A) = \mu_P^{1, \ldots, 1} (\A) + \sum_{j = 1}^k \mu_P^{\C^P_j}(\A).
$$

From Theorem~\ref{Morse index for linkages} we know that
$$
\mu_P^{1, \ldots, 1}(\A) = 
\frac{1}{2}\sum_{j = 1}^k n_j(E_j + 1) - 1 - 2\omega - \left\{ \begin{aligned}
0, & \text{ if } \sum_{j = 1}^k n_jE_j \tan A_j > 0;\\
1, & \text{ otherwise}.
\end{aligned} \right. 
$$
From Lemma~\ref{Morse index for cyclic} and (1)~of~Lemma~\ref{orthogonality of cyclic pieces} we get
$$
\mu_P^{\C_j^P}(\A) = \frac{1}{2}(n_j - 1) \cdot (E_j + 1).
$$
Summing all up, we obtain the desired formula.
\end{proof}

\section{Orthogonality with respect to the Hessian of oriented area}
\label{Orthogonality with respect to the Hessian of oriented area}
Let us remind that $\C_n$ is the configuration space of cyclic polygons with at least three different vertices (see \eqref{definition of the configuration space of cyclic polygons}).
First, we parametrise $\C_n$ smoothly. For this we introduce
$$
\H_n = \left.\left\{(\theta_1, \ldots, \theta_n) \in \left(S^1\right) ^ n \mid \text{there are at least three different points among } \theta_1, \ldots, \theta_n\right\}\right/S^1,
$$
where $S^1$ acts on $\left(S^1\right)^n$ diagonally by rotations. Consider the following map
\begin{equation}
\begin{aligned}
\tilde \ph &\colon \left(\left(S^1\right)^n \setminus Diag\right) \times \R_{>0} &\to &\left(\R^2\right)^n \setminus Diag\\
\tilde \ph &\colon (\theta_1, \ldots, \theta_n, R) &\mapsto &R\cdot \left((\cos \theta_1, \sin \theta_1), \ldots, (\cos \theta_n, \sin \theta_n)\right),
\end{aligned}
\end{equation}
\begin{lem}\label{parametrisation of cyclic polygons}
$\tilde \ph$ induces a diffeomorphism $\ph\colon \H_n \times \R_{>0} \to \C_n$.
\end{lem}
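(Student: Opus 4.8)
The plan is to construct the inverse of $\ph$ explicitly, the geometric idea being that $\tilde\ph$ lands precisely among those cyclic polygons whose circumscribed circle is centred at the origin. A dimension count makes the statement plausible and explains the bookkeeping: $\dim(\H_n \times \R_{>0}) = n$, while a cyclic polygon with full affine hull is cut out by its circumscribed circle ($3$ parameters) together with $n$ angles, modulo the $3$-dimensional group $\Isom_+(\R^2)$, again giving $n$. The two translational degrees of freedom in $\Isom_+(\R^2)$ are exactly what is spent to place the centre at $0$, and only the rotational $S^1$ survives — which is the $S^1$ by which we quotient the source. First I would check that $\ph$ is well defined: the diagonal shift $\theta_i \mapsto \theta_i + \alpha$ carries $\tilde\ph(\theta_1, \ldots, \theta_n, R)$ to its image under the rotation of $\R^2$ by $\alpha$ about the origin, an element of $\Isom_+(\R^2)$, so the composite of $\tilde\ph$ with the projection to $\C_n$ is constant on $S^1$-orbits and descends to $\ph\colon \H_n \times \R_{>0} \to \C_n$. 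The requirement that at least three of the $\theta_i$ be distinct guarantees three distinct concyclic, hence non-collinear, vertices, so $\AffHull = \R^2$ and the image does lie in $\C_n$.

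Next I would build a candidate inverse $\psi\colon \C_n \to \H_n \times \R_{>0}$. Given $[P] \in \C_n$, the three distinct vertices determine the unique circumscribed circle $\Omega_P$ with centre $o_P$ and radius $R_P$; translating by $-o_P$ centres the circle at the origin and writes each vertex as $R_P(\cos \theta_i, \sin \theta_i)$. The radius $R_P$ is an isometry invariant, and an orientation-preserving isometry moves $o_P$ equivariantly and shifts all $\theta_i$ by one common angle, so the pair $([\theta_1, \ldots, \theta_n], R_P) \in \H_n \times \R_{>0}$ depends only on the class $[P]$; this defines $\psi$.

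The heart of the argument, and the step I expect to be the main obstacle, is to verify that $\ph$ and $\psi$ are mutually inverse, which reduces to the claim that two origin-centred cyclic polygons $\tilde\ph(\theta, R)$ and $\tilde\ph(\theta', R')$ are $\Isom_+(\R^2)$-equivalent if and only if $R = R'$ and $\theta' = \theta + \alpha$ for one common $\alpha$. An orientation-preserving isometry carrying one polygon to the other must carry one circumscribed circle to the other; since isometries preserve radii this forces $R = R'$, and since both circles are centred at $0$ it must fix the origin, hence be a rotation about $0$, i.e.\ a diagonal shift of the angles. This is where all the translation-versus-rotation bookkeeping is concentrated. Granting it, $\psi \circ \ph = \mathrm{id}$ because a polygon produced by $\tilde\ph$ is already centred, and $\ph \circ \psi = \mathrm{id}$ because re-centring a representative differs from it only by a translation.

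Finally I would check smoothness of both maps, which upgrades the bijection to a diffeomorphism. The map $\tilde\ph$ is smooth by its explicit formulae, and the projections $\big((S^1)^n \setminus Diag\big) \times \R_{>0} \to \H_n \times \R_{>0}$ and $\{P\colon P \text{ cyclic}, \AffHull(P) = \R^2\} \to \C_n$ are surjective submersions for the free proper actions of $S^1$ and of $\Isom_+(\R^2)$; hence the descended map $\ph$ is smooth. For $\psi$ I would exhibit smooth local lifts: near any $[P]$ one fixes three indices whose vertices stay pairwise distinct, so that $o_P$ and $R_P$ are smooth functions of those vertices (the circumscribed circle of three non-collinear points solves a nondegenerate linear system), while $\theta_i = \arg\big((p_i - o_P)/R_P\big)$ is smooth in a chosen branch. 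Thus $\psi$ is smooth, and being a two-sided inverse of $\ph$ it makes $\ph$ a diffeomorphism.
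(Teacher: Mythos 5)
Your proof is correct, and it takes a genuinely different route from the paper's. The paper declares that $\ph$ is ``obviously a bijection'' and concentrates entirely on the infinitesimal side: it writes out the Jacobian of $\tilde\ph$ and verifies it has full rank $n+1$ at every point, so that $\tilde\ph$ is an injective immersion, the diffeomorphism property then being ``of the same form as polar coordinates''. You do the opposite: you take the bijection seriously, constructing the inverse explicitly from the circumscribed-circle data $(o_P, R_P, \theta_1, \ldots, \theta_n)$ and proving mutual inversion from the uniqueness of the circle through three distinct concyclic points, together with the observation that an orientation-preserving isometry matching two origin-centred circles of equal radius must be a rotation about the origin; smoothness of both maps is then obtained by descent through the quotient projections. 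Each approach has a payoff. Yours supplies exactly the justification the paper omits (its bijectivity claim \emph{is} your rigidity argument), and your local formula for the inverse --- circumcentre and radius determined by three chosen vertices, plus a branch of $\arg$ --- makes sense on an ambient neighbourhood of $\C_n$ in $\left.\Poly_n\right/\Isom_+(\R^2)$, not merely on $\C_n$ itself; this hands $\ph$ a smooth local left inverse and hence shows that $\C_n$ is an \emph{embedded} submanifold, which is the form in which the lemma is invoked later (proof of Lemma~\ref{orthogonality of cyclic and linkages}), and which the paper's rank computation delivers only after one additionally checks that $\tilde\ph$ is a homeomorphism onto its image. The paper's computation, in turn, is self-contained linear algebra and produces the explicit Jacobian that is recycled in the proof of Lemma~\ref{side length coordinates for cyclic polygons}. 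One point to repair in your write-up: calling $\left\{P \colon P \text{ cyclic},\ \AffHull(P)=\R^2\right\}\to\C_n$ a surjective submersion presupposes that the cyclic locus is a smooth submanifold of $\Poly_n$, which is not available a priori --- it is essentially what the lemma asserts; phrase the smoothness of $\psi$ through the ambient extension of your local formula instead, and this circularity disappears.
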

\begin{proof}
$\ph$ is obviously a bijection, so the only thing we need to check is that the Jacobian of $\tilde \ph$ has rank $(n + 1)$ at every point. In fact, it is just a statement of the form `$S^1 \times \R_{>0}$ is diffeomorphic to $\R^2 \setminus \{0\}$ via polar coordinates', but we compute the Jacobian for the sake of completeness:
$$
\Jac \ph = \begin{pmatrix}
\Jac_1 \ph\\
\vdots\\
\Jac_n \ph\\
\Jac_{n + 1} \ph
\end{pmatrix} 
=\begin{pmatrix}
-R\sin \theta_1 & R \cos \theta_1 & 0 & \ldots & 0 & 0\\
\vdots & \vdots & \vdots & \ddots & \vdots & \vdots\\
0 & 0 & 0 & \ldots & -R\sin \theta_n & R \cos \theta_n\\
\cos \theta_1 & \sin \theta_1 & \cos \theta_2 & \ldots & \cos \theta_n & \sin \theta_n
\end{pmatrix}
$$
The first $n$ rows are obviously linearly independent. Suppose one has ${\Jac_{n + 1} \ph = \lambda_1 \Jac_1 \ph + \ldots + \lambda_n \Jac_n \ph}$. Then for any $i = 1, \ldots, n$ one gets 
$$
(\cos \theta_i, \sin \theta_i) = \lambda_1(-R\sin \theta_i, R\cos \theta_i) = \lambda_1 R \left(\cos \left(\theta_i + \pi/2\right), \cos \left(\theta_i + \pi/2\right)\right),
$$
which implies $\lambda_i = 0$, a contradiction.
\end{proof}

Now we provide local coordinates for $\C_n$.
\begin{lem} \label{side length coordinates for cyclic polygons}
Let $P \in \C_n$ be an admissible non-bifurcating cyclic polygon with side lengths $l_1, \ldots, l_n>0$. For $Q \in \C_n$ let $t_i(Q) = l_i(Q) - l_i$. Then $(t_1, \ldots, t_n)$ are smooth local coordinates for $\C_n$ around $P$.
\end{lem}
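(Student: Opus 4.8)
The plan is to use the diffeomorphism $\ph\colon \H_n \times \R_{>0} \to \C_n$ from Lemma~\ref{parametrisation of cyclic polygons} to read off $\dim \C_n = n$, so that the $n$ functions $t_1, \ldots, t_n$ (equivalently $l_1, \ldots, l_n$, since they differ only by the constants $l_i(P)$) are exactly the right number to constitute a chart. By the inverse function theorem it then suffices to prove that the map $L = (l_1, \ldots, l_n)\colon \C_n \to \R^n$ has invertible differential at $P$, i.e.\ that the Jacobian of $L$ with respect to some smooth chart of $\C_n$ around $P$ is non-singular.

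First I would set up convenient coordinates near $P$. Writing a cyclic polygon as $R\cdot\big((\cos\theta_1, \sin\theta_1), \ldots, (\cos\theta_n, \sin\theta_n)\big)$, the rotation-invariant increments $\psi_i = \theta_{i+1} - \theta_i$ are well defined on $\H_n$ and satisfy $\psi_1 + \ldots + \psi_n = 2\pi w_P$; hence $(\psi_1, \ldots, \psi_{n-1}, R)$ is a smooth chart of $\C_n$ around $P$, with $\psi_n$ determined by the winding constraint. Because $P$ is admissible with all $l_i > 0$, each $\alpha_i \in (0, \pi/2)$ and each $\eps_i = \pm 1$ is locally constant, so $l_i = 2R\sin\alpha_i$ with $2\alpha_i = |\psi_i|$ is a genuinely smooth function of these coordinates, namely $l_i = \eps_i\, 2R\sin(\psi_i/2)$.

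Then I would simply compute the $n\times n$ Jacobian $J = \partial(l_1, \ldots, l_n)/\partial(\psi_1, \ldots, \psi_{n-1}, R)$. In the first $n-1$ rows it is diagonal in the $\psi$-block, with $\partial l_i/\partial\psi_i = \eps_i R\cos\alpha_i$ and $\partial l_i/\partial R = 2\sin\alpha_i$; the last row equals $(-\eps_n R\cos\alpha_n, \ldots, -\eps_n R\cos\alpha_n, 2\sin\alpha_n)$, since $\psi_n$ depends on every $\psi_j$ with derivative $-1$. Clearing the last row against the first $n-1$ (legitimate because every diagonal entry $\eps_i R\cos\alpha_i$ is non-zero) triangularises $J$ and yields
\[
\det J = \left(\prod_{i=1}^{n} \eps_i R\cos\alpha_i\right)\cdot \frac{2}{R}\sum_{i=1}^{n} \eps_i\tan\alpha_i.
\]
The product in front is non-zero, so $\det J \ne 0$ precisely when $\sum_{i=1}^n \eps_i\tan\alpha_i \ne 0$, which is exactly the hypothesis that $P$ is not bifurcating (Definition~\ref{definition of a bifurcating polygon}). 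By the inverse function theorem $L$, and hence $(t_1, \ldots, t_n)$, restricts to a diffeomorphism onto a neighbourhood of its image, giving the desired chart.

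I expect the only real work to be the determinant evaluation together with the bookkeeping around the winding constraint $\sum\psi_i = 2\pi w_P$, which is what makes the last row of $J$ special; once the matrix is written down, its triangularisation is routine, and the emergence of the factor $\sum \eps_i\tan\alpha_i$ is the anticipated link to the bifurcating/Morse dichotomy of Theorem~\ref{Morse index for linkages}.
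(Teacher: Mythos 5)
Your proof is correct and follows essentially the same route as the paper: both parametrise $\C_n$ via Lemma~\ref{parametrisation of cyclic polygons} by vertex angles and radius, differentiate the side lengths in those coordinates, and identify non-degeneracy of the Jacobian with the non-bifurcating condition $\sum_i \eps_i\tan\alpha_i \ne 0$. The only difference is bookkeeping: you pass to the rotation-quotient chart $(\psi_1,\ldots,\psi_{n-1},R)$ and evaluate a square $n\times n$ determinant, whereas the paper keeps the lifted $(n+1)\times n$ Jacobian in the variables $(\theta_1,\ldots,\theta_n,R)$ and shows it has rank $n$ by analysing a vanishing linear combination of its columns.
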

\begin{proof}
In view of Lemma~\ref{parametrisation of cyclic polygons} we just need to show that for
$$
\psi \colon \H_n \times \R_{>0} \to \R^n, \q (\theta_1, \ldots, \theta_n, R) \mapsto R\cdot(\sqrt{2 - 2\cos (\theta_2 - \theta_1)}, \ldots, \sqrt{2 - 2\cos (\theta_1 - \theta_n)})
$$
$\Jac \psi$ is of rank $n$ at points where $\theta_1 \ne \theta_2 \ne \ldots \ne \theta_n \ne \theta_1$. Indeed, $\Jac \psi$ is
$$
\begin{pmatrix}
R\frac{\sin(\theta_1 - \theta_2)}{\sqrt{2 - 2 \cos(\theta_1 - \theta_2)}}  & 0 & \ldots & 0 & R\frac{\sin(\theta_1 - \theta_n)}{\sqrt{2 - 2 \cos(\theta_1 - \theta_n)}}\\
R\frac{\sin(\theta_2 - \theta_1)}{\sqrt{2 - 2 \cos(\theta_2 - \theta_1)}}  & R\frac{\sin(\theta_2 - \theta_3)}{\sqrt{2 - 2 \cos(\theta_2 - \theta_3)}} & \ldots & 0 & 0\\
\vdots & \vdots & \vdots & \ddots & \vdots\\
0 & 0 & \ldots & R\frac{\sin(\theta_{n - 1} - \theta_n)}{\sqrt{2 - 2 \cos(\theta_{n - 1} - \theta_n)}} & 0\\
0 & 0 & \ldots & R\frac{\sin(\theta_n - \theta_{n - 1})}{\sqrt{2 - 2 \cos(\theta_n - \theta_{n - 1})}} & R\frac{\sin(\theta_n - \theta_1)}{\sqrt{2 - 2 \cos(\theta_n - \theta_1)}}\\
\sqrt{2 - 2\cos (\theta_2 - \theta_1)} & \sqrt{2 - 2\cos (\theta_3 - \theta_2)} & \ldots & \sqrt{2 - 2\cos (\theta_{n} - \theta_{n - 1})} & \sqrt{2 - 2\cos (\theta_1 - \theta_n)}
\end{pmatrix}
$$
Since $2(\theta_{i + 1} - \theta_{i}) \ne 0$, all the entries are defined and non-zero. Consider a vanishing non-trivial linear combination of columns. The form of first $n$ rows forces the coefficient at the $i$-th column to be equal (up to the common multiplier) to $\frac{\sqrt{2 - 2 \cos(\theta_i - \theta_{i+1})}}{\sin(\theta_i - \theta_{i + 1})}$, but then for the last row we have
$$
0 = \sum_{i = 1}^n\frac{2 - 2\cos (\theta_i - \theta_{i+1})}{\sin(\theta_i - \theta_{i + 1})} = 2\sum_{i = 1}^n \tan \left(\frac{\theta_i - \theta_{i + 1}}{2}\right),
$$
which means exactly that $P$ is bifurcating and contradicts the assumptions of the lemma. Thus, $\Jac \psi$ has rank $n$ as desired.
\end{proof}

First we prove orthogonality of $T_P\E^P$ and $T_P\C^P$ in $T_P\M_{sm}(\N)$.
\begin{proof}[Proof of Lemma~\ref{orthogonality of cyclic and linkages}]
To prove the first to claims let us note that smooth structures on $\E^P$, $\C^P$, and $\M_{sm}(N)$ come from the smooth structure on $\Poly_n = \left(\R ^ 2\right) ^ n$. Thus, the first claim immediately follows from Lemma~\ref{criterion for being singular configuration}, as the only cyclic polygon fitting into a straight line is a complete fold, which is not admissible. The dimension of $\E^P$ is computed according to (2) in Proposition~\ref{smooth piece of configuration space of necklace}. From Lemma~\ref{parametrisation of cyclic polygons} it follows that $\C_n$ around $P$ is a smooth submanifold in $\left.\Poly_n\right/ \Isom_+$, and from Lemma~\ref{side length coordinates for cyclic polygons} we deduce that $\C^P$ around $P$ is a smooth $(n - k)$-dimensional submanifold of $\C_n$ as it is a preimage of the linear subspace of codimension $k$ in $\R^n$ under the map $Q \mapsto (t_1(Q), \ldots, t_n(Q))$. Thus the second claim is also proved.

The third claim is equivalent (by dimension count) to representability of every vector in $T_P\M_{sm}(\N)$ as a sum of two vectors from $T_P\E^P$ and $T_P\C^P$ respectively, but this is indeed the case since every polygon $Q$ near $P$ in $\M_{sm}(\N)$ can be obtained by first a move in $\C^P$ making the sides of desired length (by Lemma~\ref{side length coordinates for cyclic polygons}) and then by a move inside $\E^Q$.

Finally, we establish the forth claim. Consider $v \in T_PC$ and $w \in T_PE$. To compute $\Hess_P \A (v, w)$, we choose a curve $\gamma\colon (-\eps, \eps) \to T_PC$ such that $\gamma(0) = P$ and $\gamma'(0) = v$, then we extend $w$ to a vector field $W(t) \in T_{\gamma(t)}E^{\gamma(t)}$ along $\gamma$. Then
$$
\Hess_P \A (v, w) =  \left. \frac{d}{dt}\right|_{t = 0} d_{\gamma(t)}\A(W(t)).
$$
But $d_{\gamma(t)}\A$ vanishes on $T_{\gamma(t)}E^{\gamma(t)}$ by Theorem~\ref{critical points}
\end{proof}

To split $T_P\C^P$ further, we need the following 
\begin{lem} \label{derivatives of radius and diagonals}
Let $P \in \C_n$ be an admissible non-bifurcating cyclic polygon such that $l_1 = l_2$ and $\angle p_1op_2 = \angle p_2op_3$, where $o$ is the centre of the circumscribed circle $\Omega$. Let $V$ be a local vector field around $P$ equal to $\left( \frac{\partial}{\partial t_1} - \frac{\partial}{\partial t_2}\right)$ in the coordinates from Lemma~\ref{side length coordinates for cyclic polygons}. Then $VR(P) = 0$ and $Vd_{ab}(P) = 0$ for $a, b \in \{1, \ldots, n\} \setminus \{2\}$, where $Vf$ if the derivative along $V$ of function $f$, $R(Q)$ is the radius of circumscribed circle of $Q$ and $d_{ab}(Q) = |q_b - q_a|$.
\end{lem}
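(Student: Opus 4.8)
The plan is to exploit the symmetry of the configuration under the reflection that swaps the first two sides, and to use the parametrisation from Lemma~\ref{parametrisation of cyclic polygons} to reduce everything to a computation on the angle variables $\theta_i$. The key observation is that the vector field $V = \frac{\partial}{\partial t_1} - \frac{\partial}{\partial t_2}$ is, infinitesimally, the generator of the deformation that shortens the first side while lengthening the second one by the same amount (to first order), keeping all other sides fixed. Since the hypotheses $l_1 = l_2$ and $\angle p_1op_2 = \angle p_2op_3$ say precisely that $P$ is symmetric under the reflection $\sigma$ of the plane fixing $o$ and $p_2$ and swapping $p_1 \leftrightarrow p_3$ (and more generally reflecting the whole circle across the line $op_2$), I expect $V$ to be the odd (antisymmetric) part of the tangent space at $P$, and the functions $R$ and $d_{ab}$ for $a,b \ne 2$ to be even under this reflection — whence their derivatives along an odd vector field must vanish.

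First I would make the reflection symmetry precise. Let me describe $P$ on its circumscribed circle $\Omega$ by angular positions $\theta_1, \ldots, \theta_n$ (via $\varphi$ from Lemma~\ref{parametrisation of cyclic polygons}), normalised so that the axis of symmetry $op_2$ is the line $\theta = \theta_2$. The reflection $\sigma\colon \theta \mapsto 2\theta_2 - \theta$ of the circle then sends the cyclic configuration with angles $(\theta_1, \ldots, \theta_n)$ to the one with angles $(\sigma\theta_1, \ldots, \sigma\theta_n)$, read in reverse cyclic order. The two hypotheses guarantee that $\theta_1$ and $\theta_3$ are symmetric about $\theta_2$ (i.e. $\theta_2 - \theta_1 = \theta_3 - \theta_2$), so $\sigma$ sends the (unordered) side-length data of $P$ to itself with side $1$ and side $2$ interchanged and all other sides preserved. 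In the coordinates $(t_1, \ldots, t_n)$ of Lemma~\ref{side length coordinates for cyclic polygons}, the induced involution on a neighbourhood of $P$ is therefore the linear swap $t_1 \leftrightarrow t_2$, $t_i \mapsto t_i$ for $i \ge 3$; its $(-1)$-eigenvector at $P$ is exactly $V = \frac{\partial}{\partial t_1} - \frac{\partial}{\partial t_2}$.

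Next I would check that the target functions are invariant under $\sigma$. The radius $R(Q)$ of the circumscribed circle is manifestly unchanged when we reflect the whole configuration, so $R \circ \sigma = R$ near $P$; hence $R$ is an even function of $V$ and $VR(P) = \frac{d}{ds}\big|_{0} R(\sigma$-image$)$ forces $VR(P) = 0$. For the diagonals $d_{ab} = |q_b - q_a|$ with $a, b \in \{1, \ldots, n\}\setminus\{2\}$, I must be slightly careful: a single vertex index $a \ne 2$ is \emph{not} in general fixed by $\sigma$ (the reflection permutes the vertices). The clean way is to note that $\sigma$ acts on vertex indices as an involution fixing $2$; but what we actually need is that the full collection of functions $\{d_{ab}\}$ is permuted among itself by $\sigma$ in a way compatible with the coordinate swap $t_1 \leftrightarrow t_2$, and that each individual $d_{ab}$ with $a,b\neq 2$ is even. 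Concretely, reflecting $Q$ preserves all pairwise distances between vertices, so $d_{ab}(\sigma Q) = d_{\sigma a, \sigma b}(Q)$; combining this with $\sigma$ acting on the coordinate chart by the transposition of $t_1, t_2$ shows that precomposition with the chart-involution sends $d_{ab} \mapsto d_{ab}$ whenever neither index is $2$, giving evenness and hence $V d_{ab}(P) = 0$.

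\textbf{The main obstacle} will be the bookkeeping in this last step: pinning down exactly how $\sigma$ permutes the vertex labels, verifying that this permutation is realised \emph{within} the chart of Lemma~\ref{side length coordinates for cyclic polygons} as the literal transposition of $t_1$ and $t_2$ (not some more complicated affine map), and confirming that the excluded index $2$ is precisely the one forced out by the condition $a,b \ne 2$. Once the symmetry is correctly set up, the vanishing of the derivatives is automatic — an odd vector field applied to an even function is zero at the fixed point — so no genuine calculation with the $\sqrt{2 - 2\cos}$ formulae is needed. If the abstract symmetry argument proves awkward to make rigorous in the given coordinates, the fallback is a direct computation: express $R$ and each $d_{ab}$ in the angular coordinates, differentiate along the explicit lift of $V$ obtained by inverting $\Jac\psi$ from Lemma~\ref{side length coordinates for cyclic polygons}, and check the cancellation term by term, which the symmetry guarantees must occur.
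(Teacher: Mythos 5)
Your guiding idea --- that $V$ should be odd and $R$, $d_{ab}$ even under a reflection symmetry, so the derivatives vanish --- is exactly the right one, and it is in spirit what the paper does. But the specific symmetry you construct does not exist under the hypotheses of the lemma, and this is a genuine gap. The reflection $\sigma$ of the plane across the line $op_2$ does \emph{not} permute the vertices of $P$ (your parenthetical claim to the contrary is the crux of the error): the hypotheses $l_1 = l_2$ and $\angle p_1op_2 = \angle p_2op_3$ give only $\sigma(p_1) = p_3$, $\sigma(p_2) = p_2$, $\sigma(p_3) = p_1$, while for $a \ge 4$ the point $\sigma(p_a)$ is in general not a vertex of $P$ at all, since nothing forces the rest of the polygon to be symmetric about that line. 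Consequently: (i) the map $Q \mapsto \sigma Q$ does not fix $P$ as a point of $\C_n$ --- it sends $P$ to its mirror image, which is a \emph{different} point because $\C_n$ is a quotient by orientation-preserving isometries only; (ii) if you compose with the reverse-cyclic-order relabelling to restore orientation, the induced permutation of side labels is the full dihedral flip $l_1 \leftrightarrow l_2$, $l_3 \leftrightarrow l_n$, $l_4 \leftrightarrow l_{n-1}, \ldots$, not the transposition $t_1 \leftrightarrow t_2$ you need, and the diagonals with $a, b \ge 4$ are likewise permuted ($d_{ab} \mapsto d_{4-a,\,4-b}$) rather than preserved. Since your involution neither fixes $P$ nor acts on the chart as the transposition, the ``odd field applied to an even function'' conclusion cannot be drawn --- not even for $R$.

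The repair is to make the symmetry local to vertex $2$ rather than global: given $Q$ near $P$, reflect \emph{only} the vertex $q_2$ across the perpendicular bisector of the diagonal $q_1q_3$ (a line through the circumcentre), leaving all other vertices and the circumcircle untouched. The resulting polygon is cyclic, inscribed in the same circle, has side lengths equal to those of $Q$ with $l_1$ and $l_2$ interchanged, and lies near $P$ --- here your two hypotheses enter, as they say precisely that this operation fixes $P$ itself. By Lemma~\ref{side length coordinates for cyclic polygons} a nearby cyclic polygon is determined by its side lengths, so this operation \emph{is} the chart involution $t_1 \leftrightarrow t_2$; it fixes $P$, sends $V$ to $-V$, and manifestly preserves $R$ and every $d_{ab}$ with $a, b \ne 2$, which finishes the argument. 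This is exactly the paper's proof: it takes the curve with $(t_1, \ldots, t_n)(P(s)) = (s, -s, 0, \ldots, 0)$, normalises representatives so that $o = (0,0)$ and $p_3 - p_1$ points along the $x$-axis, and observes that $\tilde P(-s)$ differs from $\tilde P(s)$ only in the position of the second vertex, whence every function depending only on the circumcircle and the vertices $q_a$, $a \ne 2$, is even in $s$. Your fallback (inverting $\Jac\psi$ and computing directly) would also work in principle, but it is not carried out, and the local one-vertex reflection makes it unnecessary.
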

\begin{proof}
Consider a curve $P(s)\colon (-\eps, \eps) \to \C_n, \q (t_1, \ldots, t_n)(P(s)) = (s, -s, 0, \ldots, 0)$. We choose representatives $\tilde P(s) \in \Poly_n$ in such a way that $o_{\tilde P(s)} = (0, 0)$ and $(p_3 - p_1)$ is codirectional with $x$-axes. Notice that $\tilde P(-s)$ is obtained from $\tilde P(s)$ by the following procedure: ${p_i(-s) = p_i(s)}$ for $i \ne 2$ and $p_2(-s)$ is symmetric to $p_2(s)$ relative to $y$-axes. From this it follows that ${\tilde P(s) - \tilde P(-s)) = (0, 0, 2\eta, 0, \ldots, 0)}$ for some $\eta > 0$. Hence all $p_i$ for $i \ne 2$ are not moving in the first approximation, which implies the statement of the lemma.
\end{proof}

This lemma allows us to relate $\C^P_j$ with $\C^P$ and thus prove Lemma~\ref{orthogonality of cyclic pieces}.
\begin{proof}[Proof of Lemma~\ref{orthogonality of cyclic pieces}]
The space 
$
\left\{Q \in \M_{sm}(\N) \left| \begin{aligned} 
q_i = p_i \text{ for } i \notin j^* \setminus \{s(j)\}
\end{aligned} \right.\right\}
$
is a smooth submanifold in $\M_{sm}(\N)$ diffeomorphic to $\M_{sm}((n_j, L_j), (1, |p_{s(j + 1)} - p_{s(j)}|))$. Under this identification, $\C^P_j$ is just a $\C^P$. Applying (2) of Lemma~\ref{orthogonality of cyclic and linkages} to $\M_{sm}((n_j, L_j), (1, |p_{s(j + 1)} - p_{s(j)}|))$, we get the first claim.

To establish the second claim we first prove that $T_P\C_j^P \le T_P\C^P$. Indeed, consider the coordinates from Lemma~\ref{side length coordinates for cyclic polygons}. On the one hand, when we consider cyclic polygons coordinatised by $(t_1, \ldots, t_n)$, the vectors $\left(\frac{\partial}{\partial t_{i - 1}} - \frac{\partial}{\partial t_i}\right)$ for inner $i$ form a basis of $T_P\C^P$. On the other hand, when we consider $\C_j^P$ coordinatised by $(s_i)_{i \in j^*\setminus \{s(j)\}}$, where $s_i = l_i(Q) - l_i(P)$, the vectors $\left(\frac{\partial}{\partial s_{i - 1}} - \frac{\partial}{\partial s_i}\right)$ for ${i \in j^* \setminus \{s(j)\}}$ form a basis of $T_P\C_j^P$. But by Lemma~\ref{derivatives of radius and diagonals}, this tangent vectors are the same, so the claim is proven. In fact, we proved not only that $T_P\C_j^P \le T_P\C^P$, but also that $T_P\C^P = \bigoplus_{j = 1}^k T_P\C_j^P$, since aforementioned basis of $T_P\C^P$ is a disjoint union of bases of $T_P\C_j^P$.

Now we pass to proving the third claim. Consider $v \in T_P\C_j^P$ and $w \in T_P\C_h^P$, take a curve $\gamma\colon (-\eps, \eps) \to \C_j^P$ such that $\gamma(0) = P$ and $\gamma'(0) = v$, and a curve $\sigma\colon (-\eps, \eps) \to \C_h^P$, such that $\sigma(0) = P$ and $\sigma'(0) = w$. Then extend $w$ to a vector field ${W(t) \in T_{\gamma(t)}\M_{sm}(\N)}$ along $\gamma$ by setting $W(t) = \sigma_t'(0)$, where $\sigma_t\colon (-\eps, \eps) \to \C_h^{\gamma(t)}$ is such that $\sigma_t(0) = \gamma(t)$ and for all $i \in j^* \setminus {s(j)}$ the $i$-th vertex of $\sigma_t(s)$ is the same as the $i-th$ vertex of $\sigma(s)$. Then
$$
\Hess_P \A (v, w) =  \left. \frac{d}{dt}\right|_{t = 0} W(t)\A,
$$
and it vanishes since $W(t)\A$ does not depend on $t$.
\end{proof}

\section{Configuration spaces of polygons with perimeter and one side length fixed}
\label{Addenda}
These are the spaces $\M\big((n, L), (1, l)\big)$ for $L \ge l$. The other name for such a space, {\itshape the space of broken lines of fixed length with fixed endpoints}, comes from the canonical choice of representative of each orbit: the first vertex has coordinates $(0, 0)$ and the last one --- $(l, 0)$. Our interest in these spaces was first motivated by the fact that they are simple enough to be studied completely, but then it turned out that they are important for understanding the case of a general necklace.
\begin{prop}[{\bfseries Configuration~space~in~the~`two~consecutive~beads~are~fixed'~case}]~\label{topological type in the two beads case}\\
Let $L > l$ and $n \ge 2$. Then $\M\big((n, L), (1, l)\big)$ is homeomorphic to the sphere $S^{2n - 3}$.
\end{prop}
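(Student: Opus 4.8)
The plan is to pass to the standard ``broken line'' model and to recognise the resulting space as the boundary of a convex body.

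First I would fix representatives of the orbits. Since $l > 0$, the closing side satisfies $|p_{n+1} - p_1| = l \neq 0$, so every orbit under $\Isom_+(\R^2)$ contains a unique polygon with $p_1 = (0,0)$ and $p_{n+1} = (l, 0)$: a translation normalises $p_1$, and the rotation is then uniquely determined by sending $p_{n+1}$ to $(l,0)$. An orientation-preserving isometry fixing the two distinct points $p_1, p_{n+1}$ is the identity, so the action is free and proper and $\M\big((n,L),(1,l)\big)$ is Hausdorff. Writing $v_i = p_{i+1} - p_i$ for $i = 1, \ldots, n$ (the edge vectors of the broken line $p_1, \ldots, p_{n+1}$), the constraints $\L_1(P) = L$ and $\L_2(P) = l$ become $\sum_{i} |v_i| = L$ and $\sum_{i} v_i = (l,0)$. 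This identifies $\M\big((n,L),(1,l)\big)$ with
\[
B = \Big\{(v_1, \ldots, v_n) \in (\R^2)^n : \textstyle\sum_{i} v_i = (l,0), \ \sum_{i} |v_i| = L \Big\}.
\]
As $B$ is compact and the induced map $B \to \M\big((n,L),(1,l)\big)$ is a continuous bijection onto a Hausdorff space, it is a homeomorphism.

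Next I would set up the convex geometry. Let $A = \{v : \sum_{i} v_i = (l,0)\} \subset (\R^2)^n$, an affine subspace of dimension $2n - 2$, and let $f(v) = \sum_{i=1}^n |v_i|$. The function $f$ is a norm on $(\R^2)^n = \R^{2n}$ (an $\ell^1$-sum of Euclidean norms), hence $f|_A$ is convex and coercive. By the triangle inequality $f(v) \geq |\sum_{i} v_i| = l$ on $A$, with equality exactly when all $v_i$ are non-negative multiples of $(1,0)$; in particular $\min_A f = l$, attained at $q = \big((l/n, 0), \ldots, (l/n,0)\big)$. Because $L > l$ and $f$ is continuous, the open set $\{f < L\}$ contains $q$, so the sublevel set $K = \{v \in A : f(v) \leq L\}$ is a compact (bounded because $f$ is a norm, and closed) convex set containing an $A$-neighbourhood of $q$; thus $K$ is a convex body of full dimension $2n - 2$ in $A$, with $q$ in its relative interior.

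Finally I would identify $B = \{f = L\} \cap A$ with $\partial K$ and conclude. For a unit direction $u$ in $A - q$, set $g(t) = f(q + tu)$: it is convex with global minimum $g(0) = l$, hence non-decreasing, and $g(t) \to \infty$ as $t \to \infty$ by coercivity. Let $t_0 > 0$ be the least $t$ with $g(t) = L$; the average slope of $g$ on $[0, t_0]$ is $(L - l)/t_0 > 0$, and since slopes of a convex function are non-decreasing, $g$ cannot be constant on any interval starting at $t_0$, so $g(t) > L$ for $t > t_0$. Hence each ray from $q$ meets $\{f = L\}$ in exactly one point, $\{f = L\} \cap A = \partial K$, and radial projection $x \mapsto (x - q)/|x - q|$ is a homeomorphism from $B = \partial K$ onto the unit sphere of $A - q$, namely $S^{2n - 3}$. (Equivalently, one may simply cite that the boundary of a compact convex body with nonempty interior in $\R^{2n-2}$ is homeomorphic to $S^{2n-3}$.)

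The \emph{main obstacle} is this last identification $B = \partial K$: one must rule out ``flat'' rays along which $f$ could remain equal to $L$. This is precisely where the strict inequality $L > l$ is used — it guarantees a relative-interior minimiser of value strictly below $L$ — together with the convexity and coercivity of the norm $f$. The hypothesis $n \geq 2$ serves only to make $2n - 3 \geq 1$, so that $B$ is a genuine positive-dimensional sphere rather than the empty set.
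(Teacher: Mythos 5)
Your proof is correct and takes essentially the same route as the paper: normalise $p_1 = (0,0)$, $p_{n+1} = (l,0)$, realise the configuration space as the level set $\{f = L\}$ of a convex, coercive function on a $(2n-2)$-dimensional affine space, and identify that level set with the boundary of the compact convex sublevel body, which is homeomorphic to $S^{2n-3}$. Your edge-vector coordinates are just an affine change of variables from the paper's vertex coordinates $(p_2,\ldots,p_n)$, and your ray argument ruling out flat directions merely makes explicit a standard convexity fact that the paper leaves implicit.
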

\begin{proof}
By setting $p_1 = (0, 0)$ and $p_{n + 1} = (l, 0)$ we identify $\M\big((n, L), (1, l)\big)$ with the level set
\begin{align*}
F^{-1}(L) = \left\{(p_2, \ldots, p_n) \in \left(\R^2\right)^{n - 1} \left| F(p_2, \ldots, p_n) = L\right.\right\}, \text{ where}\\
{F(p_2, \ldots, p_n) = |p_2| + |p_3 - p_2| + \ldots + |p_n - p_{n - 1}| + |(l, 0) - p_n|}.
\end{align*}
$F$ is a convex function as sum of convex functions. The sublevel set $F^{-1}((\infty, 0])$ is bounded since if any of $|p_i|$ is greater than $\R$, then $F(p_2, \ldots, p_n) \ge \R$ by triangle inequality. Also, the set $F^{-1}((\infty, 0))$ is non-empty, since if all of the $p_i$ are in the disk of radius $\delta$ around $(l/2, 0)$, then $F(p_2, \ldots, p_n) < (l/2 + \delta) + (n - 3) \delta + (l/2 + \delta) = l + (n - 1)\delta$, which is less than $L$ for small $\delta$. So, $F^{-1}(L)$ is a boundary of the compact convex set $F^{-1}((\infty, 0]) \subset \left(\R^2\right)^{n - 1} $ with non-empty interior and thus is homeomorphic to $S^{2n - 3}$.
\end{proof}

As a special case of Theorems~\ref{critical points}~and~\ref{Morse index}, we get
\begin{prop}[{\bfseries Critical~points~and~Morse~indices~in~the~`two~consecutive~beads~are~fixed'~case}]~\label{critical points in the two beads case}\\
Let $L > l$ and $n \ge 2$. Then
\begin{enumerate}
\item Critical points of $\A$ on $\M_{sm}((n, L), (1, l))$ are in bijection with the solutions of
$$
\left|U_{n - 1}(x)\right| = \dfrac{nl}{L}, \eqno (*)
$$
where $U_{n - 1}$ is the $(n - 1)$-th Chebyshev polynomial of second kind, that is, 
$
{U_{n - 1}(\cos \alpha) = \dfrac{\sin n\alpha}{\sin \alpha}.}
$ 
\item If $P$ is an admissible non-bifurcating critical configuration of $\A$ on $\M_{sm}((n, L), (1, l))$, then its Morse index is
$$
\mu_P^{n, 1}(\A) = \left\{\begin{aligned}
&2n - 2 - i, &\text{ if } &P \text{ corresponds to the $i$-th largest positive solution of } (*);\\
&i - 1, &\text{ if } &P \text{ corresponds to the $i$-th smallest negative solution of } (*).
\end{aligned}\right.
$$
\end{enumerate}
\end{prop}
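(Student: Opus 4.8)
The plan is to specialise Theorems~\ref{critical points} and~\ref{Morse index} to the necklace $\N=((n,L),(1,l))$, which has $k=2$ pieces with $n_1=n$, $n_2=1$ and $n(\N)=n+1$ vertices. For part~(1), Theorem~\ref{critical points} says a critical configuration is a cyclic $(n+1)$-gon whose first $n$ sides share the length $L/n$ and a common orientation $\eps:=\eps_1=\ldots=\eps_n\in\{-1,0,+1\}$, and whose last side has length $l$. Writing $R$ for the circumradius and $\alpha\in[0,\pi/2]$ for the common central half-angle of the first $n$ sides, the length condition is $L/n=2R\sin\alpha$; since the first $n$ sides advance by the oriented central angle $2\eps\alpha$ each, the closing side joins the points at central angles $2n\eps\alpha$ and $0$, so it has length $2R|\sin n\alpha|$. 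Thus, setting $g(\alpha):=\tfrac{|\sin n\alpha|}{\sin\alpha}$, the polygon $P$ is critical iff $g(\alpha)=\tfrac{nl}{L}$, i.e. $|U_{n-1}(\cos\alpha)|=\tfrac{nl}{L}$. I would then set $x=\eps\cos\alpha\in[-1,1]$ and use that $U_{n-1}$ has a definite parity (so $|U_{n-1}|$ is even): then $x$ solves $(*)$, and conversely $x$ recovers $\eps=\operatorname{sign}x$ and $\alpha=\arccos|x|$. As $l<L$ gives $\tfrac{nl}{L}<n=U_{n-1}(1)$, no solution hits $x=\pm1$ (which would force vanishing sides), so this is the desired bijection.

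For part~(2) I would substitute the geometric data of an admissible non-bifurcating critical $P$ into Theorem~\ref{Morse index} with $k=2$. The winding number and the closing side are controlled by the oriented central angle $\gamma:=2\pi w_P-2n\eps\alpha\in(-\pi,\pi)$ of the last side, which forces $w_P$ to be the nearest integer to $n\eps\alpha/\pi$ and gives $E_2=\operatorname{sign}\gamma$, $A_2=|\gamma|/2$. A direct computation then identifies the case-splitting quantity, $\sum_{j}n_jE_j\tan A_j=n\eps\tan\alpha+\tan(\gamma/2)=\eps\,(n\tan\alpha-\tan n\alpha)$, with the bifurcation sum $\sum_{i=1}^{n+1}\eps_i\tan\alpha_i$ of the cyclic $(n+1)$-gon; hence the non-bifurcating hypothesis is exactly what makes $P$ Morse. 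Moreover $g'=\tfrac{\sigma\cos\alpha\cos n\alpha}{\sin^2\alpha}\,(n\tan\alpha-\tan n\alpha)$ with $\sigma=\operatorname{sign}\sin n\alpha$, so on each interval between consecutive zeros of $\sin n\alpha$ the sign of the bifurcation sum records whether $g$ is increasing or decreasing.

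Take first $\eps=+1$, the positive solutions. Here Theorem~\ref{Morse index} collapses to $\mu_P(\A)=(2n-2)+\tfrac{1+E_2}{2}-2w_P-\delta$, where $\delta\in\{0,1\}$ is $0$ exactly when the bifurcation sum is positive. The crux is to prove $i=2w_P+\delta-\tfrac{1+E_2}{2}$, where $i$ is the rank of $\alpha$ (increasingly) among the solutions of $g(\alpha)=\tfrac{nl}{L}$ in $(0,\pi/2)$. I would establish this by a crossing count: on the left half-intervals $n\alpha\in((k-1)\pi,(k-\tfrac12)\pi)$ one has $w_P=k-1$ and $E_2=-1$, and on the right half-intervals $n\alpha\in((k-\tfrac12)\pi,k\pi)$ one has $w_P=k$ and $E_2=+1$; the factorisation of $g'$ above then fixes $\delta$ within each half-interval from whether $g$ rises or falls at the solution, and counting how many solutions lie in $(0,\alpha]$ yields precisely $2w_P+\delta-\tfrac{1+E_2}{2}$. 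Substituting gives $\mu_P(\A)=2n-2-i$. The main obstacle is exactly this bookkeeping: matching the discrete triple $(w_P,E_2,\delta)$ to the ordinal $i$ uniformly over all half-intervals, including the exceptional first one where $g$ is monotone decreasing rather than unimodal.

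The negative solutions then follow by symmetry, with no further index computation. A reflection in a line is an involution of $\M_{sm}(\N)$ that sends the $\eps=+1$ configuration with parameter $\alpha$ to the $\eps=-1$ configuration with the same $\alpha$ (that is, $x\mapsto-x$), hence sends the $i$-th largest positive solution to the $i$-th smallest negative one, while negating $\A$. Since $\dim\M_{sm}(\N)=2(n+1)-2-3=2n-3$ and the Morse indices of $\A$ and $-\A$ at a nondegenerate critical point are complementary, $\mu_P(\A)=(2n-3)-(2n-2-i)=i-1$, completing part~(2).
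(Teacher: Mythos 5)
Your proposal is correct, and its skeleton is the same as the paper's: both parts come from specialising Theorems~\ref{critical points} and~\ref{Morse index}, and part~(1) is derived exactly as in the paper from the chord relations $L/n = 2R\sin\alpha$, $l = 2R|\sin n\alpha|$. The real difference is in how part~(2) is organised for positive solutions. The paper argues by continuation: it anchors at the rightmost solution (asserted to be the global maximum) and walks $t$ leftward, checking that the specialised index formula drops by one exactly at roots of $U_{n-1}$ (where $\eps_{n+1}$ flips) and at extrema (where the correction term $\delta$ flips), while centre-passings of the side $p_1p_{n+1}$ leave it unchanged. You instead prove the static identity $i = 2w_P + \delta - \tfrac12(1+E_2)$ by placing each solution in a half-interval of $n\alpha$ and reading off $(w_P, E_2, \delta)$ there. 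Your version is more robust and needs no global-maximum anchor: in the walk one must notice that at a centre-passing $w_P$, $\eps_{n+1}$ \emph{and} $\delta$ all jump simultaneously (the paper's printed formula there also has $-w_P$ in place of $-2w_P$; the slips compensate), whereas each entry of your table, via $g' = \sigma\cos\alpha\cos n\alpha\,(n\tan\alpha - \tan n\alpha)/\sin^2\alpha$, is checkable in isolation. Your reflection argument with $\mu_P(\A)+\mu_P(-\A)=2n-3$ is the precise content of the paper's appeal to ``symmetry reasons''.

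One detail you must add to close the ordinal count: the local maxima of $g(\alpha)=|\sin n\alpha|/\sin\alpha$ on consecutive intervals $n\alpha\in(k\pi,(k+1)\pi)$ strictly decrease in $k$ --- the $k$-th maximum is at least $1/\sin\bigl((k+\tfrac12)\pi/n\bigr)$ (the midpoint value) while the $(k+1)$-th is at most $1/\sin\bigl((k+1)\pi/n\bigr)$, and $\sin$ increases on $(0,\pi/2)$. This is what guarantees that a level meeting the $k$-th interval meets all earlier ones, so that the solutions below a given one are exactly one from the first interval and two from each intermediate one; without it the crossing count would not match $2w_P+\delta-\tfrac12(1+E_2)$. (The paper's walk needs, and likewise omits, the same fact: it is what makes roots and extrema the only events separating consecutive solutions.)
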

\begin{proof}
By Theorem~\ref{critical points} a configuration $P \in \M_{sm}((n, L), (1, l))$ is a critical points of $\A$ if and only if it is inscribed in a circle $\Omega$ with centre $o$ and radius $R$ in such a way that ${\angle p_1op_2 = \ldots = \angle p_nop_{n + 1} =: \alpha}$. We set $c_P$ to be equal to $\cos(\alpha/2)$, where $\alpha/2 \in (0, \pi)$. Since $L/n = R\sqrt{2 - 2\cos \alpha} = 2R\sin(\alpha/2)$ and $l = R\sqrt{2 - 2\cos(n\alpha)} = 2R|\sin(n\alpha/2)|$, we get $U_{n - 1}(c_P) = nl/L$. The other direction is similar, so, we proved the first claim.

By symmetry reasons, to prove the second claim, it suffice to prove it only for $P$ with $c_P > 0$. Then by Theorem~\ref{Morse index} one has
$$
\mu_P^{n, 1}(\A) = (2n - 1) + \frac{1}{2}(\eps_{n + 1} + 1) - 1 - w_P - \left\{ \begin{aligned}
0, & \text{ if } n\tan(\alpha/2) > \eps_{n + 1}\tan(n\alpha/2);\\
1, & \text{ otherwise}.
\end{aligned} \right. 
$$
The roots and extrema of $U_{n - 1}(t)$ are interchanging. Lets start from $t = 1$ and move to the right. The extrema correspond to the bifurcating polygons (i. e. those with $n\tan(\alpha/2) = \eps_{n + 1}\tan(n\alpha/2)$ and the roots correspond to polygons with $l_{n + 1} = 0$. So, when $t$ passes a root, $\eps_{n + 1}$ changes from $1$ to $-1$ and whenever $t$ passes an extrema, the last summand changes from $0$ to $1$. When $p_1p_{n+1}$ passes through $o$,  $w_P$ increases by $1$, and $\eps_{n + 1}$ changes from $-1$ to $1$, which does not change the Morse index. The right-most $t$ corresponds to the global maximum, so the above argument completes the prove.
\end{proof}

Finally, we check that the last yet unproven ingredient of the proof of Theorem~\ref{Morse index} is in place.
\begin{proof}[Proof of Lemma~\ref{Morse index for cyclic}]
Let $P$ be as in the lemma. Without loss of generality we can assume that $\Omega_P = \Omega$ is the unit circle with center $o$, and, due to the symmetry reasons, it is enough to prove the statement for $P$ with $w_P > 0$. We should prove that the function 
$$
\frac{\A}{\L_1^2}\colon \left\{\text{polygons } P \text{ inscribed in the unit circle with } \frac{l_{n + 1}(P)}{\L_1(P)} = \frac{l}{L}\right\} \to \R
$$ 
attains a non-degenerate local maximum at $P$. For this it suffice to prove that the function
\begin{equation} \label{definition of G}
G\colon \left\{\begin{aligned}&\text{polygons inscibed}\\ &\text{in the unit circle} \end{aligned}\right\} \to \R, \q G(Q) = \dfrac{2\A(Q)}{l_{n + 1}(Q)^2} - \lambda \left(\frac{\L_1(Q)^2}{l_{n + 1}(Q)^2} - \frac{L ^ 2}{l^2}\right) - \mu \left(\frac{\L_1(Q)^2}{l_{n + 1}(Q)^2} - \frac{L ^ 2}{l^2}\right) ^ 2
\end{equation}
attains a non-degenerate local maximum at $P$ for suitable $\lambda$ and $\mu$.  We set ${\alpha = \angle p_1op_{2} = \ldots = \angle p_{n}op_{n + 1} \in (0, \pi)}$ and introduce local coordinates by setting ${t_i(Q) = \angle q_ioq_{i + 1} - \alpha}$ for ${i = 1, \ldots, n}$. First, we write the functions involved in the definition \eqref{definition of G} in these coordinates:
\begin{align*}
l_{n + 1}\left(t_1, \ldots, t_n\right) &= \sqrt{2 - 2\cos\left(n\alpha + \sum_{i = 1}^n t_i\right)};\\
\L_1\left(t_1, \ldots, t_n\right)  &= \sum_{i = 1}^n \sqrt{2 - 2\cos (\alpha + t_i)};\\
2\A\left(t_1, \ldots, t_n\right)  &= \sum_{i = 1}^n \sin(\alpha + t_i) - \sin\left(n\alpha + \sum_{i = 1}^n t_i\right)
\end{align*}
Second, we perform the computations in the 2-jets at point $P$, which by the aforementioned coordinates are identified with $\left.\R[t_1, \ldots, t_n]\right/I$, where $I$ is the ideal generated by all products $t_it_jt_h$ with $i, j, h = 1, \ldots, n$. It turns out that the 2-jets of the functions we are interested in are all contained in the subring $\R + \R T_1 + \R T_1 ^ 2 + \R T_2$, where $T_1 = \sum\limits_{i = 1}^n t_i$ and $T_2 = \sum_{i = 1}^n t_i ^ 2$. This subring is naturally identified with the ring $\mathcal R = \left.\R[T_1, T_2]\right/(T_1 ^ 3, T_2 ^ 2, T_1T_2)$. With all the identifications done, the 2-jets of the functions involved in the definition \eqref{definition of G} look as follows:
\begin{align*}
j_2l_{n + 1} &= l \cdot\left(1 + \frac{1}{2}\cot\left(\frac{n\alpha}{2}\right)T_1 - \frac{1}{8}T_1 ^ 2\right);\\
j_2\L_1 &= L \cdot\left(1 + \frac{1}{2n}\cot\left(\frac{\alpha}{2}\right)T_1 - \frac{1}{8n}T_2\right);\\
j_2(2\A) &= (n\sin \alpha - \sin(n\alpha)) + (\cos \alpha - \cos n\alpha) T_1 - \frac{\sin \alpha}{2}T_2 + \frac{\sin(n\alpha)}{2}T_1 ^ 2.
\end{align*}
Now, setting $x = \tan \frac{\alpha}{2}$ and $y = \tan \frac{n\alpha}{2}$, we can write the 2-jets of the summands in \eqref{definition of G} in more or less compact form:
\begin{align*}
j_2\left(\frac{\L_1 ^ 2}{l_{n + 1} ^ 2} - \frac{L ^ 2}{l ^ 2}\right) &= \frac{nx(1 + y ^ 2)(y - nx)}{y ^ 3(1 + x ^2)}T_1 - \frac{nx ^ 2(1+y^2)}{4y^2(1+x^2)}T_2 + C_1(n, x, y) T_1 ^ 2;\\
j_2\left(\frac{\L_1 ^ 2}{l_{n + 1} ^ 2} - \frac{L ^ 2}{l ^ 2}\right) ^ 2 &= \frac{n ^ 2 x ^ 2(1 + y ^ 2)^2(y - nx) ^ 2}{y ^ 6(1 + x ^ 2)}T_1 ^ 2;\\
j_2\left(\frac{2\A}{l_{n + 1} ^ 2} - \frac{2\A(P)}{l ^ 2}\right) &= \frac{(1 + y ^ 2)(y - nx)}{2y^3(1 + x ^2)}T_1 - \frac{x(1 + y ^ 2)}{4y^2(1 + x ^2)}T_2 + C_2(n, x, y) T_1^2.
\end{align*}
To get rid of $T_1$ in $j_2G$ we set $\lambda = \frac{1}{2nx}$, and then we finally obtain
$$
j_2\big(G - G(P)\big) = -\frac{x(1 + y ^ 2)}{8y^2(1 + x ^ 2)}T_2 + \left(C_2(n, x, y) - \frac{1}{2nx} C_1(n , x, y) - \mu \cdot \frac{n ^ 2 x ^ 2(1 + y ^ 2)^2(y - nx) ^ 2}{y ^ 6(1 + x ^ 2)}\right) T_1 ^ 2, 
$$
Note that the first summand is negative definite quadratic form since $x > 0$. As for the second term, $nx - y \ne 0$ as $P$ is not bifurcating, and thus, whatever $C_1$ and $C_2$ are, when $\mu$ is big enough the second term is non-positive definite quadratic form, hence $G$ attains a non-degenerate local maximum at $P$ for some large positive $\mu$ and we are done.
\end{proof}

\end{document}